\let\csname ver@amsthm.sty\endcsname\relax
\numberwithin{equation}{section}
\newtheorem{thm}{Theorem}[section]
\newtheorem{lemma}[thm]{Lemma}
\newtheorem{cor}[thm]{Corollary}
\newtheorem{prop}[thm]{Proposition}
\newtheorem{Example}[thm]{Example}
\newtheorem{Remark}[thm]{Remark}
\crefname{thm}{Theorem}{Theorems}
\crefname{lemma}{Lemma}{Lemmas}
\crefname{cor}{Corollary}{Corollaries}
\crefname{prop}{Proposition}{Propositions}
\crefname{example}{Example}{Examples}
\crefname{remark}{Remark}{Remarks}
\newcommand{\emailhref}[1]{\email{\href{#1}{#1}}}
\newcommand{\Des}{\mathrm{Des}}
\newcommand{\des}[2]{\mathrm{d}(#1;#2)} 
\newcommand{\size}[1]{|#1|} 
\newcommand{\asc}[1]{T^{\uparrow #1}} 
\newcommand{\desc}[1]{T^{\downarrow #1}} 
\newcommand{\order}[0]{s} 
\newcommand{\degree}[0]{h} 
\newcommand{\abar}[1]{\bar{a}_{#1}(T)} 
\newcommand{\descset}[0]{V_D} 
\newcommand{\ascset}[0]{V_A} 
\newcommand{\Natlab}[1]{N_{#1}} 
\newcommand{\natlab}[1]{n_{#1}} 
\title[Descent polynomials for labeled trees]{Descent polynomials for labeled trees}
\author[S. Poznanovi\'c]{Svetlana Poznanovi\'c}\emailhref{spoznan@clemson.edu}
\address{School of Mathematical and Statistical Sciences, Clemson University, Clemson, SC 29634}
\thanks{S.P. was supported by NSF DMS 1815832.}
\author[M. Rodriguez Hertz]{Maria Rodriguez Hertz}\emailhref{mdr1@williams.edu}\address{Department of Mathematics and Statistics, Williams College, Williamstown, MA 01267}
\author[S. Valore-Caplan]{Solomon Valore-Caplan}\emailhref{svalorecaplan@g.hmc.edu}\address{Department of Mathematics, Harvey Mudd College, Claremont, CA 91711}
\author[D. Wichmann]{David Wichmann}\emailhref{david.wichmann@go.mnstate.edu}\address{Department of Computer Science, Minnesota State University Moorhead, Moorhead, MN 56563}
\keywords{descent polynomials, labeled trees}
\subjclass{05A15}
\begin{document}

\begin{abstract} Motivated by the properties of the descent polynomials, which enumerate permutations of $S_n$ with a fixed descent set, we define descent polynomials for labeled rooted trees. We give recursive and explicit formulas for these polynomials and show when known properties of the descent polynomials carry over to the setting of trees.
\end{abstract}

\maketitle


\section{Introduction} \label{S:introduction}

Let $T$ be a rooted tree with a vertex set $V$ of size $\order$. We will draw rooted trees with the root on top. For $v \in V$, let $p(v)$ denote the parent of $v$. A labeling of $T$ is a bijection $w: V \rightarrow [\order]$. The descent set of a labeling $w$ is \[\Des(w) = \{v \in V \colon w(v) > w(p(v))\}.\] In particular, the root is never included in the descent set. The set of natural labelings of $T$ is \[ \Natlab{T}= \{w \colon w \text { is a labeling of } T,\Des(w) = \emptyset\}. \] It is well-known that the number of natural labelings of $T$ is given by the following hook-length formula
\[\natlab{T} = \frac{n!}{\prod_{v \in V(T)}h_v},\] where $h_v$ is the size of the subtree $T_v$ of $T$ rooted at the vertex $v$.

For an integer $n \geq \order$, let $G(T, n)$ be the tree with $n$ vertices obtained by adding a chain of size $n-\order$ above the root of $T$. Let $\descset \subseteq V$.  Let  \[ D(T;n) = \{ w \colon w \text { is a labeling of } G(T;n),\mathrm{Des}(w) = \descset \}\] and 

\[\des{T}{n} = |D(T; n)|.\] Note that $D(T;n)$ and $\des{T}{n}$ depend on $\descset$ but, since in what follows $\descset$ will be fixed, we keep the notation simpler by not making this dependence explicit. One can readily see that $\des{T}{\order} =0$ if and only if $\descset$ contains the root of $T$. Moreover, $\des{T}{n} > 0$ for $n > \order$. This can be seen by the following construction of a labeling in $D(T;n)$: label the vertices of $G(T;n)$ in $\descset$ by the numbers $n$, $n -1$, \dots, starting from the lowest generation and moving up, and then   label the remaining vertices by $1$, $2$, \dots, again starting at the lowest generation and moving up.

From now on, when we refer to a tree $T$, we mean $T$ with the distinguished set of vertices $\descset$.  The vertices in $\descset$ are called \emph{descent vertices} and will be represented in the figures as black nodes. All other vertices in $G(T;n)$ are called \emph{ascent vertices} and are colored white. See Figure~\ref{fig:Extending-to-n-vertices} for an example. 

When $T$ is a chain, the labelings in $D(T;n)$ correspond to permutations of $n$ with a fixed descent set. The function $\des{T}{n}$ was shown to be a polynomial in $n$ by MacMahan~\cite{macmahon2001combinatory} in 1915. However, there appears to not have been any study of this polynomial until recently, when Diaz-Lopez, Harris, Insko, Omar, Sagan~\cite{diaz2019descent} initiated a study of $\des{T}{n}$, motivated by the properties of the analogous peak polynomial for permutations~\cite{billey2013permutations,diaz2017proof}. After that, q-analogues of the peak and descent polynomials were studied in~\cite{gaetz2021q}. A generalziation of the descent polynomials to permutations of multisets was studied in~\cite{raychev2023generalization}.

Labeled trees are a natural extension of permutations. A lot of classical permutations statistics, including descents, have analogues in labeled trees and share many of the  properties~\cite{grady2020tree, gonzalez2016note}. We will show that some of the established properties of the descent polynomials for permutations also hold in the tree setting. First, in Section~\ref{S:computing}, we prove that $\des{T}{n}$ is a polynomials and we find its degree. We also give recursions   as well an explicit formula for computing $\des{T}{n}$. Then, in Section~\ref{S:expansions}, we consider expansions of $\des{T}{n}$ in certain binomial bases and prove results about the coefficients in those expansions. Finally, Section~\ref{sec:roots} is devoted to understanding the roots of $\des{T}{n}$.

    \begin{figure}[ht]
        \centering
        \begin{subfigure}[b]{0.3\textwidth}
            \centering
            \begin{tikzpicture}[level distance = 1cm, sibling distance=1cm, nodes={draw, circle, minimum size = 6mm}, -,
    		        ascent/.style={circle, draw=black, fill=white, text=black, thick, radius=0.3},
    		        descent/.style={circle, draw=black, fill=black, text=white, radius=0.3},
    		        path/.style={circle, draw=white, fill=white, text=black, radius=0.3}
    		        ]
                \node[ascent, label=left:$v_6$] {}

                                child { node [descent, label=left:$v_4$]  {} 
                                    child { node [ascent, label=left:$v_1$] {} }
                                    child { node [descent, label=right:$v_2$] {} }}
                                child [missing]
                                child { node [ascent, label=right:$v_5$]  {} 
                                    child { node [ascent, label=right:$v_3$] {} }
                    };
            \end{tikzpicture}
            \caption{A tree $T$}
        \end{subfigure}
        \begin{subfigure}[b]{0.3\textwidth}
            \centering
            \begin{tikzpicture}[level distance = 1cm, sibling distance=1cm, nodes={draw, circle, minimum size = 6mm}, -,
		        ascent/.style={circle, draw=black, fill=white, text=black, thick, radius=0.3},
		        descent/.style={circle, draw=black, fill=black, text=white, radius=0.3},
		        path/.style={circle, draw=white, fill=white, text=black, radius=0.3}
		        ]
                \node[ascent, label=left:$v_n$] {}
                    child{ node [path] {$\vdots$}
                        child{ node [ascent, label=left:$v_6$] {}
                            child { node [descent, label=left:$v_4$]  {} 
                                child { node [ascent, label=left:$v_1$] {} }
                                child { node [descent, label=right:$v_2$] {} }}
                            child [missing]
                            child { node [ascent, label=right:$v_5$]  {} 
                                child { node [ascent, label=right:$v_3$] {} }}}
                };
        \end{tikzpicture}
        \caption{The tree $G(T, n)$}
        \end{subfigure}
        \begin{subfigure}[b]{0.3\textwidth}
            \centering
            \begin{tikzpicture}[level distance = 1cm, sibling distance=1cm, nodes={draw, circle, minimum size = 6mm}, -,
    		        ascent/.style={circle, draw=black, fill=white, text=black, thick, radius=0.3},
    		        descent/.style={circle, draw=black, fill=black, text=white, radius=0.3},
    		        path/.style={circle, draw=white, fill=white, text=black, radius=0.3}
    		        ]
                \node[ascent] {7}
                        child { node [ascent] {6} 
                            child { node [ascent] {4}
                                child { node [descent]  {5} 
                                    child { node [ascent] {2} }
                                    child { node [descent] {8} }}
                                child [missing]
                                child { node [ascent]  {3} 
                                    child { node [ascent] {1} }}}
                    };
            \end{tikzpicture}
            \caption{A labeling in $D(T; 8)$}
        \end{subfigure}
        \caption{}
        \label{fig:Extending-to-n-vertices}
    \end{figure}

\section{Computing $\des{T}{n}$} \label{S:computing}

Let $\ascset = V \setminus \descset$. For $v \in V$, let $T_v$ denote the subtree of $T$ rooted at $v$. Let $h_v$ be the number of vertices in $T_v$; this is also known as the \emph{hook length} of $v$. For $v \in \descset$, let $\asc{v}$  denote $T$ with $v$ changed from a descent to an ascent. For $v \in \ascset$, let  $\desc{v}$ be $T$ with $v$ changed from an ascent to a descent. We use $T \setminus T_v$ to denote the tree obtained from $T$ by deleting the subtree $T_v$, with distinguished descent vertices inherited from $T$. For a tree $T'$ derived from $T$ with one of these operations, we will generally use $\descset(T')$ to denote the set of descent vertices of $T'$ inherited from $\descset$. We first give a recursive formula for computing $\des{T}{n}$.

\begin{prop} \label{prop:recursion}
Let $T$ be a rooted tree of size $\order$ with a distinguished subset of vertices $\descset$ and let $n \geq \order$. For $v \in \descset$, we have
\begin{equation} \label{rec}
    \des{T}{n} = \binom{n}{h_v} \cdot \des{\asc{v}_v}{h_v} \cdot \des{T \setminus T_v}{n-h_v} - \des{\asc{v}}{n},
\end{equation}
where, by convention,  $\des{\emptyset}{k} = 1$ for $k \geq 0$.
\end{prop}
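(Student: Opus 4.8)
The plan is to prove the recursion by an inclusion--exclusion that first relaxes the descent constraint at the single vertex $v$ and then exploits the fact that, once that one constraint is gone, the subtree $T_v$ detaches cleanly from the rest of $G(T;n)$. First I would introduce the set $B$ of labelings $w$ of $G(T;n)$ whose descent behavior agrees with $\descset$ at every vertex \emph{except possibly} $v$; formally, $u \in \Des(w) \iff u \in \descset$ for every $u \neq v$, with no condition imposed at $v$ itself. Since $v \in \descset$, a labeling $w \in B$ falls into exactly one of two disjoint cases: if $v \in \Des(w)$ then $\Des(w) = \descset$, and $w$ is counted by $\des{T}{n}$; if $v \notin \Des(w)$ then $\Des(w) = \descset \setminus \{v\}$, which is precisely the descent set of $\asc{v}$, so $w$ is counted by $\des{\asc{v}}{n}$. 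These cases are exhaustive, giving $|B| = \des{T}{n} + \des{\asc{v}}{n}$, hence $\des{T}{n} = |B| - \des{\asc{v}}{n}$. This already isolates the subtracted term in the statement.

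The core step is to evaluate $|B|$ by a factorization argument. Each condition $u \in \Des(w)$ depends only on the relative order of $w(u)$ and $w(p(u))$, i.e.\ only on the edge $\{u,p(u)\}$. The subtree $T_v$ is joined to the remainder $R := G(T;n) \setminus T_v$ by the single edge $\{v,p(v)\}$, and the condition carried by that edge is exactly the one we dropped in defining $B$. Consequently, after choosing which $h_v$ of the labels $[n]$ are assigned to $T_v$ --- there are $\binom{n}{h_v}$ choices --- the constraints on $T_v$ and on $R$ become independent, each depending only on the relative order of the labels within its part. Up to the order-preserving relabeling of a chosen $h_v$-subset onto $[h_v]$, the admissible labelings of $T_v$ are counted by $\des{\asc{v}_v}{h_v}$: here $v$ is the root of $T_v$, so its own descent condition is vacuous, which is exactly why we mark $v$ as an ascent and take $\asc{v}_v$. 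The remainder $R$ is $T$ with $T_v$ deleted and the chain of length $n-\order$ left atop the root of $T$, which is precisely $G(T\setminus T_v;\, n-h_v)$, so its admissible labelings are counted (again by order-invariance) by $\des{T\setminus T_v}{n-h_v}$. Multiplying gives $|B| = \binom{n}{h_v}\cdot \des{\asc{v}_v}{h_v}\cdot \des{T\setminus T_v}{n-h_v}$, and substituting into the identity above yields \eqref{rec}.

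I expect the factorization to be the only delicate point: one must argue carefully that relaxing the single constraint at $v$ genuinely decouples the two blocks (every remaining descent condition lives strictly inside $T_v$ or strictly inside $R$), and that the counts are order-invariant, so arbitrary $h_v$- and $(n-h_v)$-subsets may be replaced by $[h_v]$ and $[n-h_v]$. The one boundary case to check is $v$ being the root of $T$, in which case $T\setminus T_v = \emptyset$ and $R$ is just the chain of $n-\order$ vertices; its only all-ascent labeling is the increasing one, and this single labeling is exactly what the convention $\des{\emptyset}{k}=1$ records, so the formula remains valid (and consistently gives $\des{T}{\order}=0$ when $n=\order$ and $v$ is the root, as it must).
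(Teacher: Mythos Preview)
Your proposal is correct and follows essentially the same argument as the paper: your set $B$ is exactly the paper's set $P$, and both count $|B|=|P|$ by the same choose-then-factor decomposition into $T_v$ and $G(T\setminus T_v;n-h_v)$. The only difference is expository---you spell out more carefully why dropping the constraint at $v$ decouples the two pieces and you explicitly verify the boundary case where $v$ is the root.
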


\begin{proof}
Consider the set 
\[P = \{w \colon w \text{ is a labeling of } G(T, n) , \mathrm{Des}(w) = \descset \text{ or } \mathrm{Des}(w) = \descset \setminus \{v\} \}. \] 
Then, clearly, $|P| = \des{T}{n} + \des{\asc{v}}{n}$. On the other hand, a labeling $w \in P$ can be obtained uniquely by:  
$(1)$ labeling the subtree $T_v$ by $h_v$ of the $n$ available labels so that the vertices in $(\descset \setminus \{v\}) \cap T_v$ are descents 
and 
$(2)$ labeling the remaining part of the tree $T \setminus T_v$ with the remaining $n-h_v$ labels so that the vertices in  $\descset \cap (T \setminus T_v)$ are descents.
This yields  $|P| = \binom{n}{h_v} \cdot \des{\asc{v}_v}{h_v} \cdot \des{T \setminus T_v}{n-h_v}$ and the recurrence follows.
\end{proof}

A descent vertex $v \in \descset$ is \emph{maximal} if none of its ancestors is in $\descset$. 

\begin{cor} \label{cor:degree}
Let $T$ be a rooted tree, let $\{v_1, \dots, v_m \}$ be the set of its maximal descent vertices, and let $\degree = h_{v_1} + h_{v_2} + \dots + h_{v_m}$. Then $\des{T}{n}$ is a polynomial of degree $\degree$.
\end{cor}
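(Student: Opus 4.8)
The plan is to induct on the number of descent vertices $\size{\descset}$, driving the induction with the recursion of \cref{prop:recursion}. To keep the top-degree term under control it is cleanest to prove the slightly stronger statement that $\des{T}{n}$ is a polynomial of degree $\degree$ \emph{with positive leading coefficient}; positivity then propagates through the induction and makes the non-vanishing of the leading coefficient automatic, so that the degree is genuinely $\degree$ and not merely at most $\degree$.

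For the base case $\descset = \emptyset$ the empty sum gives $\degree = 0$, and $\des{T}{n}$ is by definition the number of natural labelings of $G(T;n)$. I would apply the hook-length formula to $G(T;n)$: the $n-\order$ chain vertices added above the root contribute hook lengths $\order+1, \order+2, \dots, n$, while every vertex of $T$ keeps its hook length, so
\[
\des{T}{n} = \frac{n!}{(n!/\order!)\,\prod_{v\in V} h_v} = \frac{\order!}{\prod_{v\in V} h_v} = \natlab{T},
\]
a positive constant, i.e. a degree-$0$ polynomial with positive leading coefficient.

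For the inductive step, assume $\descset \neq \emptyset$ and fix a maximal descent vertex $v := v_1$, then apply \cref{prop:recursion} to this $v$ and analyze the two summands separately. In the product $\binom{n}{h_v}\cdot \des{\asc{v}_v}{h_v}\cdot \des{T\setminus T_v}{n-h_v}$, the factor $\binom{n}{h_v}$ is a polynomial in $n$ of degree $h_v$ with positive leading coefficient $1/h_v!$; the factor $\des{\asc{v}_v}{h_v}$ is a positive constant, since the root $v$ of $\asc{v}_v$ is an ascent and hence its descent set does not contain its root (recall $\des{T'}{\order'}=0$ iff the descent set contains the root). Because $v$ is maximal, the maximal descent vertices of $T\setminus T_v$ are exactly $v_2,\dots,v_m$, and deleting $T_v$ leaves their hook lengths unchanged; as $T\setminus T_v$ has strictly fewer descent vertices, the inductive hypothesis makes $\des{T\setminus T_v}{n-h_v}$ a polynomial in $n$ of degree $h_{v_2}+\dots+h_{v_m} = \degree-h_v$ with positive leading coefficient. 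Multiplying, this summand is a polynomial of degree $h_v+(\degree-h_v)=\degree$ with positive leading coefficient.

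It then remains to show the subtracted term $\des{\asc{v}}{n}$ has strictly smaller degree. Since $\asc{v}$ has one fewer descent vertex, by induction its degree is the sum of the hook lengths of its maximal descent vertices, which are $v_2,\dots,v_m$ (unaffected by recoloring $v$) together with the topmost descent vertices $u_1,\dots,u_k$ lying strictly below $v$; the latter become maximal exactly because $v$ is now an ascent and no ancestor of $v$ was a descent. As the $u_i$ are pairwise incomparable descendants of $v$, the subtrees $T_{u_1},\dots,T_{u_k}$ are disjoint and contained in $T_v\setminus\{v\}$, so $\sum_i h_{u_i}\le h_v-1$, giving
\[
\deg \des{\asc{v}}{n} = (\degree - h_v) + \sum_i h_{u_i} \le \degree - 1 < \degree.
\]
Hence $\des{T}{n}$ is the difference of a degree-$\degree$ polynomial with positive leading coefficient and a polynomial of degree $<\degree$, so it has degree exactly $\degree$ with positive leading coefficient, closing the induction. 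I expect the main obstacle to be the degree bookkeeping for the subtracted term: correctly identifying the maximal descent vertices of $\asc{v}$ and proving the strict bound $\sum_i h_{u_i}\le h_v-1$, which is precisely what prevents the subtraction from cancelling the degree-$\degree$ term.
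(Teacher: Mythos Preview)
Your proof is correct and follows essentially the same approach as the paper: induct via the recursion of \cref{prop:recursion} applied at a maximal descent vertex, verifying that the product term has degree exactly $\degree$ and the subtracted term has strictly smaller degree. Your version is in fact slightly more careful---by tracking positivity of the leading coefficient and explicitly noting that $\des{\asc{v}_v}{h_v}>0$ (since the root of $\asc{v}_v$ is an ascent), you make the non-vanishing of the top-degree term explicit, whereas the paper leaves this implicit; your induction on $\size{\descset}$ rather than on $\degree$ is a cosmetic difference, since both parameters strictly decrease in the recursion.
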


\begin{proof}

We proceed by induction on $\degree$. First we check the case $\degree = 0$, i.e., when $T$ has no descents. Note that in this case, the top chain of $n - \order$ vertices in $G(T, n)$ must be labeled from top to bottom by the labels $n, n -1, \dots, n - \order$, so $d(T;n) = \natlab{T}$, a constant. 

Now, let $\order \geq 1$. Let $v$ be one of the maximal vertices of $T$. Consider the terms in~\eqref{rec}. Note the following:
\begin{itemize}
    \item $\binom{n}{h_v}$ is a polynomial of degree $h_v$.
    \item $\des{\asc{v}_v}{h_v}$ is a constant since it is not dependent on $n$.
    \item The maximal descents of $T \setminus T_v$ are contained in the set of maximal descents of $T$. So, by the inductive hypothesis, $\des{T \setminus T_v}{n}$, and therefore, $\des{T \setminus T_v}{n-h_v}$ as well, are a polynomials of degree $h - h_v$. 
    \item The maximal descent vertices of $\asc{v}$ that are not maximal descents in $T$ are in the subtree $T_v$, so by the inductive hypothesis,  $\des{\asc{v}}{n}$ is a polynomial of degree $< \degree$.
\end{itemize}

Thus, by Proposition~\ref{prop:recursion},
\begin{align*}
    \des{T}{n} &= (\text{poly. of degree }h_v)\cdot (\text{constant}) \cdot (\text{poly. of degree }\degree - h_v) 
 - (\text{poly. of degree }< \degree)  \\
    &= (\text{poly. of degree } \degree)
\end{align*}
as desired.
\end{proof}

For this reason, we also refer to $\des{T}{n}$ as \emph{the descent polynomial} of $T$. Note that instead of applying a recursion at descent vertices as in Proposition~\ref{prop:recursion}, one can do it at the ascent vertices, as illustrated by the following corollary. 

\begin{cor}\label{cor:ascent recursion}
Let $T$ be a rooted tree of size $\order$ with a distinguished set of vertices $\descset$ and let $n \geq \order$. Let $v \in \ascset$. Then
\begin{equation}\label{eq:recas}
    \des{T}{n} = \binom{n}{h_v} \cdot \des{\asc{v}_v}{h_v} \cdot \des{T \setminus T_v}{n-h_v} - \des{\desc{v}}{n}
\end{equation}
\end{cor}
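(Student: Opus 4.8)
The plan is to mimic the proof of Proposition~\ref{prop:recursion} almost verbatim, the only change being that we now toggle the status of the ascent vertex $v$ \emph{upward} into a descent rather than downward. Concretely, I would introduce the set of labelings in which the descent status at $v$ is left unconstrained while every other vertex is forced to agree with $\descset$:
\[ P = \{ w \colon w \text{ is a labeling of } G(T,n),\ \Des(w) = \descset \text{ or } \Des(w) = \descset \cup \{v\} \}. \]
Since $v \in \ascset$, the two alternatives correspond exactly to leaving $v$ an ascent or promoting it to a descent, and no labeling satisfies both, so partitioning $P$ according to the status of $v$ gives $\size{P} = \des{T}{n} + \des{\desc{v}}{n}$.

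The second count of $\size{P}$ is identical to the one in Proposition~\ref{prop:recursion}. The key observation is that the product construction there never used the hypothesis $v \in \descset$; it only used that membership of $v$ in the prescribed descent set was the single degree of freedom. A labeling $w \in P$ is built uniquely by $(1)$ choosing $h_v$ of the $n$ labels and arranging them on $T_v$ so that the vertices of $\descset \cap T_v$ are descents, and $(2)$ arranging the remaining $n - h_v$ labels on $T \setminus T_v$ so that the vertices of $\descset \cap (T \setminus T_v)$ are descents. Because the edge from $v$ to its parent is severed by this split, the resulting descent status at $v$ is whatever the chosen labels happen to dictate, and both possibilities land in $P$; conversely, every $w \in P$ decomposes this way. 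Hence
\[ \size{P} = \binom{n}{h_v} \cdot \des{\asc{v}_v}{h_v} \cdot \des{T \setminus T_v}{n - h_v}, \]
where $\des{\asc{v}_v}{h_v}$ counts the arrangements of step $(1)$: since $v$ is the root of $T_v$ it is automatically an ascent there, so $\asc{v}_v$ denotes $T_v$ with inherited descent set $\descset \cap T_v$, a quantity that is well defined whether or not $v \in \descset$ and that agrees with the meaning of this symbol in Proposition~\ref{prop:recursion}.

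Equating the two expressions for $\size{P}$ and solving for $\des{T}{n}$ yields~\eqref{eq:recas}. I do not anticipate a genuine technical obstacle: the entire content is recognizing that Proposition~\ref{prop:recursion} is really a statement about freeing a single vertex, so the same bijection applies with $\descset \cup \{v\}$ in place of $\descset \setminus \{v\}$. The one point worth stating carefully is that $\des{\asc{v}_v}{h_v}$ is the correct count for step $(1)$ in both the ascent and the descent case, because in the subtree $T_v$ the root $v$ is never a descent, so its inherited descent set is $\descset \cap (T_v \setminus \{v\})$ either way.
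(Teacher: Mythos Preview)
Your proof is correct, but it takes a longer route than the paper does. The paper's argument is a single line: apply Proposition~\ref{prop:recursion} to the tree $\desc{v}$ at the vertex $v$ (which is now a descent vertex), and observe that the resulting identity rearranges to~\eqref{eq:recas}, since $(\desc{v})^{\uparrow v} = T$, $(\desc{v})^{\uparrow v}_v = \asc{v}_v$, and $\desc{v} \setminus T_v = T \setminus T_v$. You instead re-enter the combinatorial argument that underlies Proposition~\ref{prop:recursion}, defining the set $P$ with $\descset \cup \{v\}$ in place of $\descset \setminus \{v\}$ and redoing the double count. Both arguments are driven by the same observation---freeing the status of a single vertex---but the paper packages it as a formal substitution into an already-proved statement, whereas you reprove the bijection from scratch. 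The paper's approach is shorter and makes the corollary's dependence on Proposition~\ref{prop:recursion} transparent; your approach is more self-contained and, as you note, makes explicit why the symbol $\asc{v}_v$ is the right object even when $v \in \ascset$.
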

\begin{proof}
If we apply Proposition~\ref{prop:recursion} to the tree $\desc{v}$ and the vertex $v$, we get an equality which is equivalent to~\eqref{eq:recas}.

\end{proof}

Our next result gives an explicit formula for $\des{T}{n}$ but we introduce some notation first. Let $N \subseteq \descset$. Suppose we delete from $T$ the edges between the vertices in $N$ and their parents. This procedure yields $|N|+1$ smaller trees rooted at the vertices of $N$ and the root of $T$: $T_0, \dots, T_{|N|}$, with descent sets inherited from $\descset$. Let $V(T_i)$ denote the set of vertices of $T_i$, $0 \leq i \leq |N|$.

\begin{prop}\label{prop:explicit}
Let $T$ be a rooted tree with descent set $\descset$, then
\begin{equation}\label{eq:fla}
    \des{T}{n} = n! \cdot \sum_{N \subseteq \descset} (-1)^{\size{\descset} - \size{N}} \cdot  \prod_{i = 0}^{|N|} \frac{1}{\prod_{v \in V(T_i)} h_v},
\end{equation}
where the hook length $h_v$ of the vertex $v \in V(T_i)$ is calculated within the tree $T_i$.
\end{prop}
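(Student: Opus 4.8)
The plan is to establish \eqref{eq:fla} by inclusion--exclusion over the descent set, interpreting each summand as a count of natural labelings of a forest. For $N\subseteq\descset$ let $E(N)$ be the number of labelings $w$ of $G(T,n)$ with $\Des(w)=N$, so that $E(\descset)=\des{T}{n}$, and let
\[ A(N)=\sum_{M\subseteq N}E(M) \]
be the number of labelings of $G(T,n)$ with $\Des(w)\subseteq N$; equivalently, those having an ascent at every vertex outside $N$. Since the sets $\{M:M\subseteq N\}$ form the Boolean lattice, Möbius inversion gives
\[ \des{T}{n}=E(\descset)=\sum_{N\subseteq\descset}(-1)^{\size{\descset}-\size{N}}A(N), \]
so the whole proposition reduces to the single identity $A(N)=n!\prod_{i=0}^{\size{N}}\bigl(\prod_{v\in V(T_i)}h_v\bigr)^{-1}$.

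To prove that identity I would read the constraint ``ascent at every vertex outside $N$'' off the forest obtained by cutting. Deleting in $G(T,n)$ the edges joining the vertices of $N$ to their parents splits the tree into components $T_0,T_1,\dots,T_{\size{N}}$, where $T_0$ is the component of the root and therefore also contains the appended chain of $n-\order$ vertices. A labeling of $G(T,n)$ with $\Des(w)\subseteq N$ is precisely one in which, inside each component, every non-root vertex carries a smaller label than its parent, with no condition imposed across the deleted edges; that is, a natural labeling of the forest $T_0\sqcup T_1\sqcup\cdots\sqcup T_{\size{N}}$. Distributing the $n$ labels among the components and then filling each component naturally, the single-tree hook-length formula from the introduction yields
\[ A(N)=\binom{n}{\size{V(T_0)},\dots,\size{V(T_{\size{N}})}}\prod_{i=0}^{\size{N}}\frac{\size{V(T_i)}!}{\prod_{v\in V(T_i)}h_v}=n!\prod_{i=0}^{\size{N}}\frac{1}{\prod_{v\in V(T_i)}h_v}, \]
the factorials $\size{V(T_i)}!$ telescoping against the multinomial coefficient. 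Substituting into the inversion formula gives \eqref{eq:fla}.

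The step needing the most care---and the main obstacle---is the bookkeeping for the root component $T_0$. Because $G(T,n)$ carries a chain of $n-\order$ vertices above the root of $T$, this chain lies in $T_0$, so both $\size{V(T_0)}$ and the hook lengths along the chain grow with $n$; it is exactly this $n$-dependence that turns the right-hand side above into a polynomial in $n$ rather than a constant multiple of $n!$. I would therefore phrase the cutting construction relative to $G(T,n)$, making explicit that $T_0$ is rooted at the top of the chain, and verify the telescoping with the chain vertices included. A secondary check is the degenerate case where the root of $T$ belongs to $\descset$: cutting at such a vertex detaches the chain as a component of its own, and one should confirm that the empty-product and $\des{\emptyset}{k}=1$ conventions keep the formula valid, in agreement with the vanishing $\des{T}{\order}=0$ noted in the introduction. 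With these conventions fixed, the proof is just the combination of the forest hook-length count with Möbius inversion.
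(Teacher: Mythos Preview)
Your argument is correct and takes a genuinely different route from the paper's. The paper proves the formula by induction on $\size{\descset}$, using the recursion of Proposition~\ref{prop:recursion}: it expands $\des{T}{n}$ via the recursion, applies the inductive hypothesis to $\asc{v}_v$, $T\setminus T_v$, and $\asc{v}$, and then matches up the resulting terms according to whether $v$ lies in the indexing subset $R$. Your proof, by contrast, is a direct one: you recognise the right-hand side as M\"obius inversion over the Boolean lattice of $\descset$ and identify each summand $A(N)$ combinatorially as the number of natural labelings of the forest obtained by cutting $G(T,n)$ at $N$, which the hook-length formula evaluates immediately. Your approach is more conceptual and explains \emph{why} the formula has the shape it does; it also bypasses the recursion entirely. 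The paper's approach has the minor advantage of staying within the toolkit already developed, but at the cost of a somewhat opaque term-matching computation. Your care about performing the cut in $G(T,n)$ rather than in $T$ is exactly right: the component $T_0$ must contain the appended chain, so that its hook product absorbs the factors $s+1,\dots,n$ and the identity $A(\emptyset)=\natlab{G(T,n)}=\natlab{T}$ holds; this is implicit in the paper's base case but not spelled out in the statement.
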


\begin{proof}
We proceed by induction on $\size{\descset}$. If $\descset = \emptyset$, then $\des{T}{n} = \natlab{T}$, the right-hand side of ~\eqref{eq:fla} has only one term and we get the well-known hook length formula for $\natlab{T}$.   Assume that the statement holds for $\size{\descset} \leq k$.  

Let $\size{\descset} = k + 1$ and $v \in \descset$. Using Proposition~\ref{prop:recursion} and the inductive hypothesis, we get

\begin{align} 
    \des{T}{n} =& h_v! \cdot \binom{n}{h_v} \sum_{L \subseteq \descset(\asc{v}_v)} (-1)^{\size{\descset(\asc{v}_v)} - \size{L}} \cdot  \prod_{i=0}^{\size{L}} \frac{1}{\prod_{u \in V(T_i)} h_u} \nonumber\\ \label{eq:exp}
    &\cdot (n - h_v)! \cdot \sum_{J \subseteq \descset(T \smallsetminus T_v)} (-1)^{\size{\descset(T \smallsetminus T_v)} - \size{J}} \cdot  \prod_{i=0}^{\size{J}} \frac{1}{\prod_{u \in V(T_i)} h_u}\\
    &- n! \cdot \sum_{R \subseteq \descset(\asc{v})} (-1)^{\size{\descset(\asc{v})} - \size{R}} \cdot  \prod_{i=0}^{\size{R}} \frac{1}{\prod_{u \in V(T_i)} h_u} \nonumber
\end{align}

There is clearly a common factor in front of all terms, $n!$. Multiplying a term from the first sum by a term of the second sum results in a term of the following form, for some $L \subseteq \descset(\asc{v}_v)$ and $J \subseteq \descset(T \smallsetminus T_v)$:

\begin{equation} \label{eq:LJ}
    (-1)^{\size{\descset(\asc{v}_v)}-\size{L}} \cdot \prod_{i=0}^{\size{L}} \frac{1}{\prod_{u \in V(T_i)} h_u} \cdot (-1)^{\size{\descset(T \smallsetminus T_v)} - \size{J}} \cdot \prod_{i=0}^{\size{J}} \frac{1}{\prod_{u \in V(T_i)} h_u}.
\end{equation}
Note that
\[\size{\descset(\asc{v}_v)} + \size{\descset(T \backslash T_v)} = \size{\descset(\asc{v})} = \size{\descset(T)} -1 =k.\]
Moreover, the set $L \cup J$ varies over all subsets of $\descset(\asc{v})$. The tree $\asc{v}$ is obtained by connecting $\asc{v}_v$ and $T \setminus T_v$ by an edge adjacent to $v$. Therefore, the collection of subtrees obtained by subdividing $\asc{v}_v$ as prescribed by $L$ and $T \setminus T_v$ as prescribed by $J$ is the same as the one obtained by subdividing $T$ at the vertices in $R = L \cup J \cup \{v\} \subseteq \descset(T)$. So, the product~\eqref{eq:LJ} can be rewritten as

\begin{equation}\label{eq:R}
    (-1)^{k +1 - \size{R}} \prod_{i=0}^{\size{R}} \frac{1}{\prod_{v \in V(T_i)} h_v}.
\end{equation}
Using this in~\eqref{eq:exp}, we get
\begin{align*}
    \des{T}{n} =& n! \cdot \sum_{\substack{R \subseteq \descset(T) \\ v \in R}} (-1)^{k+1 - \size{R}} \prod_{i=0}^{\size{R}} \frac{1}{\prod_{u \in V(T_i)} h_u} 
    - n! \cdot \sum_{R \subseteq \descset(\asc{v})} (-1)^{k - \size{R}} \cdot  \prod_{i=0}^{\size{R}} \frac{1}{\prod_{u \in V(T_i)} h_u}\\
    =& n! \cdot \sum_{\substack{R \subseteq \descset(T) \\ v \in R}} (-1)^{k+1 - \size{R}} \cdot \prod_{i=0}^{\size{R}} \frac{1}{\prod_{u \in V(T_i)} h_u} 
    + n! \cdot \sum_{\substack{R \subseteq \descset(T) \\ v \notin R}} (-1)^{k+1 - \size{R}} \cdot \prod_{i=0}^{\size{R}} \frac{1}{\prod_{u \in V(T_i)} h_u}\\
    =& n! \cdot \sum_{R \subseteq \descset(T)} (-1)^{\size{\descset(T)} - \size{R}} \cdot \prod_{i=0}^{\size{R}} \frac{1}{\prod_{u \in V(T_i)} h_u}
\end{align*}
\noindent as desired.
\end{proof}

Our last result in this section is a formula for $\des{T}{n+1}$ in terms of $\des{T}{n}$ which will be used later in Section~\ref{sec:roots}. To see the relation, we consider which vertices of $G(T; n+1)$ can be labeled  $n+1$. Since $n+1$ is the largest label, a vertex $v$ with this label must be either the root of $G(T; n+1)$ or in $\descset$  and none of its children, if any, is in $\descset$. So, there are three cases: $v$ is the root of $G(T, n+1)$, $v$ is a descent leaf, or $v$ is a non-leaf descent with ascent children. With this in mind, we define the following subsets of $\descset$:

\[V^{\prime} =\{ v\in \descset \colon v \text{ is a leaf} \}\]
and
\[V^{\prime \prime} = \{v \in \descset \colon v \text{ is not a leaf  and the children of }v \text{ are not in } \descset \}.\]
 
 We also define the following notation. $T/v$ is the tree $T$ with the vertex $v$ deleted and the edge between the vertex $v$ and its parent contracted so that the children of $v$ become the children of the parent of $v$. In particular, if $v$ is a leaf, then $T/v = T \setminus v$. The vertices in $T/v$ that are not the original children of $v$ inherit the property of being/not being descent vertices from $T$. Furthermore, if $v$ has $c(v)$ children, then there are $2^{c(v)}$ ways to specify which of them is in $\descset$. We will denote all these $2^{c(v)}$ possibilities by $(T/v)_r$, $1 \leq r \leq 2^{c(v)}$. An example of this is shown in Figure~\ref{fig:configurations}.

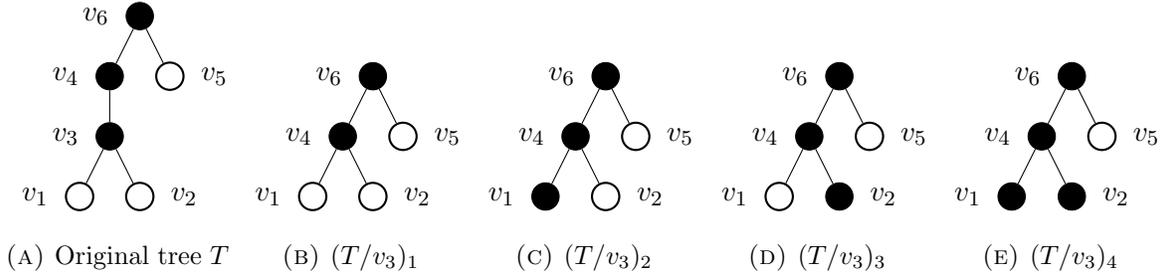
\begin{figure}[htb]
\centering
\begin{subfigure}[b]{0.18\textwidth}
\centering

\begin{tikzpicture}[level distance = .8cm, sibling distance=.8cm, nodes={draw, circle}, -,
        ascent/.style={circle, draw=black, fill=white, text=black, thick, radius=0.3},
        descent/.style={circle, draw=black, fill=black, text=white, radius=0.3,},
        ]
    \node[descent, label=left:$v_6$] {}
            child { node [descent, label=left:$v_4$]  {} 
                child { node [descent, label=left:$v_3$] {} 
                    child { node [ascent, label=left:$v_1$] {} }
                    child { node [ascent, label=right:$v_2$] {} }}}
            child { node [ascent, label=right:$v_5$] {} 
        };
\end{tikzpicture}

\caption{Original tree $T$}
\label{fig:6v-example}
\end{subfigure}
\begin{subfigure}[b]{0.18\textwidth}
\centering

\begin{tikzpicture}[level distance = .8cm, sibling distance=.8cm, nodes={draw, circle}, -,
		        ascent/.style={circle, draw=black, fill=white, text=black, thick, radius=0.3},
		        descent/.style={circle, draw=black, fill=black, radius=0.3},
		        ]
            \node[descent, label=left:$v_6$] { }
                    child { node [descent, label=left:$v_4$]{} 
                            child { node [ascent, label=left:$v_1$] { } }
                            child { node [ascent, label=right:$v_2$] { } }}
                    child { node [ascent, label=right:$v_5$] { } 
                };
\end{tikzpicture}

\caption{$(T/v_3)_1$}
\end{subfigure}
\begin{subfigure}[b]{0.18\textwidth}
\centering

\begin{tikzpicture}[level distance = .8cm, sibling distance=.8cm, nodes={draw, circle}, -,
		        ascent/.style={circle, draw=black, fill=white, text=black, thick, radius=0.3},
		        descent/.style={circle, draw=black, fill=black, text=white, radius=0.3},
		        ]
            \node[descent, label=left:$v_6$] { }
                    child { node [descent, label=left:$v_4$]{} 
                            child { node [descent, label=left:$v_1$] { } }
                            child { node [ascent, label=right:$v_2$] { } }}
                    child { node [ascent, label=right:$v_5$] { } 
                };
        \end{tikzpicture}

\caption{$(T/v_3)_2$}
\end{subfigure}
\begin{subfigure}[b]{0.18\textwidth}
\centering

\begin{tikzpicture}[level distance = .8cm, sibling distance=.8cm, nodes={draw, circle}, -,
		        ascent/.style={circle, draw=black, fill=white, text=black, thick, radius=0.3},
		        descent/.style={circle, draw=black, fill=black, text=white, radius=0.3},
		        ]
            \node[descent, label=left:$v_6$] { }
                    child { node [descent, label=left:$v_4$]{} 
                            child { node [ascent, label=left:$v_1$] { } }
                            child { node [descent, label=right:$v_2$] { } }}
                    child { node [ascent, label=right:$v_5$] { } 
                };
        \end{tikzpicture}

\caption{$(T/v_3)_3$}
\end{subfigure}
\begin{subfigure}[b]{0.18\textwidth}
\centering

\begin{tikzpicture}[level distance = .8cm, sibling distance=.8cm, nodes={draw, circle}, -,
		        ascent/.style={circle, draw=black, fill=white, text=black, thick, radius=0.3},
		        descent/.style={circle, draw=black, fill=black, text=white, radius=0.3},
		        ]
            \node[descent, label=left:$v_6$] { }
                    child { node [descent, label=left:$v_4$]{} 
                            child { node [descent, label=left:$v_1$] { } }
                            child { node [descent, label=right:$v_2$] { } }}
                    child { node [ascent, label=right:$v_5$] { } 
                };
        \end{tikzpicture}

\caption{$(T/v_3)_4$}
\end{subfigure}

\caption{$(T/v_3)_r$ for a tree $T$}
\label{fig:configurations}
\end{figure}

\begin{thm}\label{thm:d(T;n+1)}  Let $T$ be a rooted tree with descent set $\descset$, then

\begin{equation*}
\des{T}{n+1} = \des{T}{n}  + \sum_{v \in V^{\prime}} \des{T/v}{n} + \sum_{v \in V^{\prime \prime}}  \sum_{r=1}^{2^{c(v)}} \des{(T/v)_r}{n}.
\end{equation*}

\end{thm}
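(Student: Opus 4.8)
The plan is to classify the labelings of $G(T;n+1)$ counted by $\des{T}{n+1}$ according to which vertex receives the largest label $n+1$, and to exhibit for each class a bijection onto the labelings counted by one of the terms on the right-hand side. As observed in the discussion preceding the statement, a vertex carrying $n+1$ must be either the root of $G(T;n+1)$, a descent leaf (a vertex of $V^{\prime}$), or a non-leaf descent all of whose children are ascents (a vertex of $V^{\prime\prime}$); these three possibilities are mutually exclusive and exhaustive, so $\des{T}{n+1}$ splits as the sum of the corresponding counts. Since $n \geq \order$ forces $G(T;n+1)$ to have at least one chain vertex, the root of $G(T;n+1)$ is never a vertex of $T$, so the first class is disjoint from the other two.

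For the first class I would delete the root of $G(T;n+1)$ together with its label $n+1$. The remaining $n$ vertices form exactly $G(T;n)$; the new root (the former child of the deleted vertex) was already an ascent because its label is below $n+1$, and no other descent status changes, so the descent set stays $\descset$. Conversely, adjoining a fresh top vertex labeled $n+1$ recovers the original labeling, giving a bijection with the labelings counted by $\des{T}{n}$. For a descent leaf $v \in V^{\prime}$, deleting $v$ and its label leaves the tree $G(T/v;n) = G(T\setminus v;n)$ on $n$ vertices; since $v$ is a leaf, removing it alters the status of no other vertex, and the descent set drops to $\descset \setminus \{v\}$, which is precisely the descent set inherited by $T/v$. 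Reinserting $v$ as a child of its old parent with label $n+1$ inverts this, yielding a bijection with the labelings counted by $\des{T/v}{n}$; summing over $v \in V^{\prime}$ produces the middle term.

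The main work, and the step I expect to be the real obstacle, is the class $V^{\prime\prime}$. Here deleting a non-leaf descent $v$ (carrying $n+1$) and contracting the edge to its parent reattaches the children of $v$ to $p(v)$, and the crux is that this changes the descent status of each former child: in the original labeling a child $c$ was an ascent only because $w(c) < n+1 = w(v)$, whereas after contraction $c$ becomes a descent precisely when $w(c) > w(p(v))$. Since the labels of the children of $v$ may fall on either side of $w(p(v))$ independently, the contracted labeling realizes every one of the $2^{c(v)}$ descent/ascent patterns on these children, and each such pattern is exactly one of the configurations $(T/v)_r$ of Figure~\ref{fig:configurations}. I would check that the resulting labeling of $(T/v)_r$ has descent set equal to the set inherited by that configuration, and that the map is a bijection: the inverse reinserts $v$ with label $n+1$ as a child of $p(v)$ and rehangs the designated children under $v$, which automatically makes all of them ascents and makes $v$ a descent, restoring $\descset$. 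Summing over all configurations and all $v \in V^{\prime\prime}$ yields the final term. The one subtlety to treat carefully is when $v$ is the root of $T$: then $p(v)$ must be read as the parent of $v$ inside the ambient tree $G(T;n+1)$ (the adjacent chain vertex), so that the contraction remains well-defined and the bookkeeping of vertex counts and descent sets goes through unchanged.
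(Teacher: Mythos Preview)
Your proposal is correct and follows essentially the same argument as the paper's proof: both partition $D(T;n+1)$ according to the location of the label $n+1$, handle the top-of-chain and descent-leaf cases by deletion, and handle the $V''$ case by contracting the edge to the parent and observing that the former children of $v$ may acquire any of the $2^{c(v)}$ descent/ascent patterns. Your treatment is in fact more careful than the paper's in spelling out why the contracted labeling lands in some $(T/v)_r$ and in flagging the edge case where $v$ is the root of $T$; the paper leaves these details implicit.
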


\begin{proof}
As we started in the discussion above, we split $D(T; n+1)$ into three subsets based on the location of the vertex $v$ labeled $n+1$: $(1)$ $v$ is the top vertex of $G(T;n+1)$, $(2)$  $v \in V ^\prime$, and $(3)$ $v \in V ^{\prime \prime}$. 

In the first case,  by deleting $v$ we get a correspondence with $\mathrm{D}(T; n+1)$. In the second case, by deleting the leaf $v$ we get a one-to-one correspondence with the labelings of $G(T \setminus v;n)  = G(T/v;n)$. In the third case, by contracting the edge between $v$ and its parent, and deleting the label $n+1$, we get a labeling of one of the trees $G((T/v)_r ; n)$. Conversely, given a labeling $w \in G((T/v)_r; n)$, one can produce a labeling of $G(T; n+1)$ by adding the label $n+1$ on $v$. In this larger labeling, all descent positions are the same as in $w$ except for the children of $v$, which are never descents. Therefore, the number of labelings of $G(T; n+1)$ in which $v \in V^{\prime \prime}$ is $\sum_{r=1}^{2^{c(v)}} \des{(T/v)_r}{n}$.

\end{proof}

\section{Expanding $\des{T}{n}$ into binomial bases} \label{S:expansions}

In this section we discuss some properties of the expansions of $\des{T}{n}$ in a couple of binomial bases. Let $\mathrm{deg}(\des{T}{n}) = h$. First consider
the following basis: 
\begin{equation*}
    a = \left\{ \binom{n-\degree}{0}, \binom{n-\degree}{1}, \dots, \binom{n-\degree}{\degree} \right\} \text{.}
\end{equation*}
When $T$ is a chain, it was shown in~\cite{diaz2019descent} that the expansion of $\des{T}{n}$  in this basis has non-negative coefficients. It was further conjectured in~\cite{diaz2019descent} and later proved by Bencs~\cite{bencs2021some} that the coefficients in such an expansion form a log-concave sequence. We prove that such results also hold for $\des{T}{n}$, when $T$ is a tree which satisfies certain conditions.

\begin{thm} \label{thm:a-positivity} Let $T$ be a tree of size $\order$ such that its root $r$ is not in $\descset$  but all of its children are in $\descset$ and let $h=s-1 = \mathrm{deg}(\des{T}{n})$. Then

\begin{equation*}
    \des{T}{n} = \sum_{k=1}^{\degree} a_k(T) \binom{n-\degree}{k}
\end{equation*}
where the coefficient $a_k(T)$ is the number of labelings $w$ in $D(T;2\degree)$ such that $\{w_1, \dots, w_{\degree}\} \cap [\degree+1, 2\degree] = [\degree+1, \degree+k]$, where $w_1, \dots, w_{\degree}$ are the labels of the vertices of $T$ in $V \setminus \{r\}$.

\end{thm}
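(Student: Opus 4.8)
The plan is to first pin down the coefficients $a_k(T)$ by linear algebra and then give each one its combinatorial meaning through two bijections and an inclusion–exclusion. The stated equality $\degree = \order - 1 = \deg \des{T}{n}$ (immediate from Corollary~\ref{cor:degree}, since the hypotheses force the maximal descent vertices to be exactly the children of $r$) guarantees that $\{\binom{n-\degree}{k}\}_{k=0}^{\degree}$ is a basis for the polynomials of degree at most $\degree$, so there are unique scalars with $\des{T}{n} = \sum_{k=0}^{\degree} a_k(T)\binom{n-\degree}{k}$; I must show $a_0(T)=0$ and that $a_k(T)$ equals the stated count. Applying the forward difference $\Delta f(n) = f(n+1)-f(n)$ together with $\Delta\binom{n-\degree}{k} = \binom{n-\degree}{k-1}$ and $\binom{0}{j}=[j=0]$ yields $a_k(T) = \big(\Delta^k \des{T}{n}\big)\big|_{n=\degree} = \sum_{i=0}^{k}(-1)^{k-i}\binom{k}{i}\des{T}{\degree+i}$. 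Thus everything reduces to understanding the evaluations $\des{T}{\degree+i}$ for $0 \le i \le k \le \degree$, each of which uses a host tree no larger than $G(T;2\degree)$.

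Next I would connect these evaluations to $D(T;2\degree)$. For $P \subseteq [\degree+1,2\degree]$ let $A_P$ be the set of $w \in D(T;2\degree)$ whose labels in $P$ all lie on the top chain (the root $r$ together with the $\degree-1$ added vertices), and set $A_j = A_{\{2\degree-j+1,\dots,2\degree\}}$. Since the chain is a decreasing run, the $j$ largest labels lie on it exactly when they occupy its top $j$ vertices; deleting those $j$ vertices and their labels is a bijection $A_j \to D(T;2\degree-j)$, so $|A_j| = \des{T}{2\degree-j}$, i.e.\ $\des{T}{\degree+i}=|A_{\degree-i}|$. The crux is the second fact: $|A_P|$ depends only on $\size{P}$. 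To prove it I would record a labeling of $G(T;2\degree)$ as the pair consisting of the $\degree$-element set $K$ of labels on the chain (whose decreasing arrangement is then forced, with $r$ receiving $\min K$) together with a labeling of the non-root vertices of $T$ by $[2\degree]\setminus K$ that realizes the descents of $T$ and has every child of $r$ exceeding $\min K$. A standardization argument shows the number of admissible tree-labelings depends on $K$ only through $\min K$, say it equals $f(\min K-1)$, whence $|A_P| = \sum_{K\supseteq P,\,\size{K}=\degree} f(\min K - 1)$; counting the chain-sets of a given minimum shows this depends only on $\min P$ and $\size{P}$.

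Writing $F(\alpha)$ for this value when $\min P = \alpha$ and $\size{P}=p$, a short manipulation with Pascal's rule gives $F(\alpha+1)-F(\alpha) = \big(f(\alpha)-f(\alpha-1)\big)\binom{2\degree-\alpha-p}{\degree-p}$. Because every element of $P$ exceeds $\degree$, I only ever need $\alpha \ge \degree+1$, for which $2\degree-\alpha-p < \degree-p$ forces the binomial coefficient to vanish; hence $F$ is constant on the relevant range and $|A_P| = |A_{\size{P}}|$. This vanishing-via-$\min P > \degree$ step is the part I expect to be most delicate, since it is exactly where the hypothesis ``$r$ an ascent, all children descents'' enters.

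Finally I would assemble the answer by inclusion–exclusion. The set $B_k$ counted by $a_k(T)$ consists of the $w\in D(T;2\degree)$ whose non-root $T$-labels meet $[\degree+1,2\degree]$ in exactly $[\degree+1,\degree+k]$; equivalently, the large labels $\degree+k+1,\dots,2\degree$ lie on the chain while $\degree+1,\dots,\degree+k$ do not, so $B_k$ is the subset of $A_{\degree-k}$ avoiding $\degree+1,\dots,\degree+k$ on the chain. Inclusion–exclusion on these forbidden labels gives $|B_k| = \sum_{S\subseteq\{\degree+1,\dots,\degree+k\}} (-1)^{\size{S}} \big|A_{\{\degree+k+1,\dots,2\degree\}\cup S}\big| = \sum_{t=0}^{k}(-1)^t\binom{k}{t}\,|A_{(\degree-k)+t}|$, using $|A_P|=|A_{\size{P}}|$. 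Re-indexing by $t=k-i$ and substituting $|A_{\degree-i}| = \des{T}{\degree+i}$ turns this into precisely the finite-difference formula for $a_k(T)$ from the first paragraph, so $a_k(T)=|B_k|$; the case $k=0$ gives $a_0(T)=|A_{\degree}|=0$, since the whole chain cannot carry all of $[\degree+1,2\degree]$ while still leaving the children of $r$ larger than $r$.
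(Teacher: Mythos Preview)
Your route is genuinely different from the paper's. The paper partitions $D(T;n)$ directly: it records $w[h]=\{w_1,\dots,w_h\}\cap[h+1,n]$, groups labelings by $|w[h]|=k$, and proves via a single order-preserving relabeling that the number of $w$ with $w[h]=X$ depends only on $|X|$; this immediately gives $|D_k(T;n)|=a_k(T)\binom{n-h}{k}$ and $a_0=0$ falls out because $D_0(T;n)$ is visibly empty. Your key lemma ``$|A_P|$ depends only on $|P|$'' is the M\"obius-dual of that same invariance, so the combinatorial heart is equivalent; what differs is the packaging, namely finite differences plus inclusion--exclusion inside the fixed object $D(T;2\degree)$ rather than a decomposition of $D(T;n)$ for variable $n$.

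There is, however, one real gap in your write-up. Your deletion bijection $A_j\to D(T;2\degree-j)$ only makes sense for $j\le \degree-1$, since $2\degree-j\ge \order$ is needed for $D(T;2\degree-j)$ to be a set of labelings. Hence the substitution $|A_{\degree-i}|=\des{T}{\degree+i}$ is justified only for $i\ge 1$, and matching your inclusion--exclusion for $|B_k|$ against the finite-difference formula for $a_k(T)$ actually yields
\[
a_k(T)-|B_k|=(-1)^k\bigl(\des{T}{\degree}-|A_{\degree}|\bigr)=(-1)^k\,\des{T}{\degree}
\]
for every $k$. You have shown $|A_{\degree}|=0$, but not that the polynomial value $\des{T}{\degree}$ vanishes; your sentence ``the case $k=0$ gives $a_0(T)=|A_{\degree}|=0$'' silently assumes it. The missing fact is exactly the forward direction of Theorem~\ref{thm:s-1 root}: since the root of $T$ has a child in $\descset$, $\order-1=\degree$ is a root of $\des{T}{n}$. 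That theorem's proof uses only the recursions of Section~\ref{S:computing}, so citing it here is not circular and closes the argument cleanly.
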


\begin{proof}
By Corollary~\ref{cor:degree}, $\des{T}{n}$ is a polynomial of degree $\degree = \order -1$. Therefore, $\des{T}{n}$ can be written uniquely as a linear combination of the basis vectors

\begin{equation*}
    \left\{ \binom{n-\degree}{0}, \binom{n-\degree}{1}, \dots, \binom{n-\degree}{\degree} \right\} \text{.}
\end{equation*}
For a labeling $w$ of $G(T;n)$ which is in $D(T;n)$, we define

\begin{equation*}
    w[\degree] = \{w_1, \dots, w_{\degree}\} \cap [\degree+1, n],
\end{equation*}
where $w_i$ the label of vertex $v_i$ in $V \setminus \{r\}$. Further, we use this to define, for $0 \leq k \leq \degree$,

\begin{equation*}
    D_k(T; n) = \{w \in D(T;n) \colon \size{w[\degree]} = k \} \text{.}
\end{equation*}

Since $D(T;n)$ is the disjoint union of $D_k(T;n)$ for $k \geq 0$,  to prove our statement, it suffices to show $\size{D_k(T;n)} = a_k(T) \binom{n-\degree}{k}$. Note that $D_0(T;n) = \emptyset$ for the condition $\{w_1, ..., w_{\degree}\} \cap [\degree+1, n] = \emptyset$ implies that the labels of the children of $r$ are at most $\degree$, while the label of the root is at least $\degree+1$. But this contradicts the fact that the children of $r$ are descents.

Now, we assume $n \geq 2\degree$, since, if we show the equality holds for an infinite number of values, then it must hold for every value. For $k \geq 1$, consider $w \in D_k(T;n)$. There are $\binom{n-\degree}{k}$ ways of choosing the $k$ elements of $w[\degree]$. We claim that for two $k$-element sets $X, Y \subseteq [\degree+1, n]$,
\begin{equation*}
    \size{\{ w \in D_k(T;n) \colon w[h] = X \}} = \size{\{ w \in D_k(T;n) \colon w[h] = Y \}}.
\end{equation*}  
To show this, let  $f:X \to Y$ $f$ be the order preserving bijection between the sets $X$ and $Y$. The map $f$ induces a bijection $
    g: \{ w \in D_k(T;n) \colon w[h] = X \} \to \{ w \in D_k(T;n) \colon w[h] = Y \}$. For $g \in \{ w \in D_k(T;n) \colon w[h] = X \}$, we construct $u = g(w)$ by applying $f$ to the elements of $w[\degree]$ leaving the labels of $V \setminus \{r\}$ unchanged, and labeling the root $r$ of $T$ and the chain above it in ascending order using the remaining elements. This map clearly preserves the descent points everywhere except possibly at the children of the root of $T$.
    
Consider a child $c$ of $r$. First, notice $w(r) \in [\degree]$ since the vertices above the children of $r$ are labeled in increasing order and our assumption that $k \geq 1$ implies there is at least one element of $[\degree]$ not used in the labeling of $V \setminus \{r\}$. Note also, that $w(r) = u(r)$ since the map $g$ does not affect any labels in $[\degree]$. If $u(c) \in [\degree+1, n]$ then $c$ is clearly a descent in $u$; alternatively if $u(c) \in [\degree]$, then $w(c) = u(c) > u(r) = w(r)$ and $c$ is a descent in $u$.

It is not hard to see that the map $g$ is a bijection. Therefore, we have shown that for any $k$-element $X \subseteq [\degree+1, n]$,
\[\size{D_k(T;n)} = \size{\{ w \in D_k(T;n) \colon w[\degree] = X \}} \binom{n-\degree}{k}. \]One can take $X = [\degree+1, \degree+k]$. Also, note that the labels greater than $2\degree$ are  associated to the same vertices of $G(T;n)$ for all labelings in $D_k(T;n)$, independent of the value $k \leq \degree$. So,
\[a_k(T) = \size{\{ w \in D_k(T;n) \colon w[\degree] = X \}} = \size{\{ w \in D_k(T;2\degree) \colon w[\degree] = [\degree+1, \degree+k] \}}. \]
\end{proof}

\begin{lemma} \label{lem:plus-vertex-at-the-root} Let $T$ be a rooted tree and let $T^{\prime}$ be $T$ with an ascent vertex added as a parent to the root of $T$. Then,

\begin{equation*}
    \des{T}{n} = \des{T^{\prime}}{n}
\end{equation*}

\end{lemma}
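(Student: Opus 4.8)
The plan is to avoid the polynomial formulas entirely and argue that the two counting problems are literally the same: I would show that $G(T';n)$ and $G(T;n)$ are isomorphic as rooted trees equipped with a distinguished descent set, so that $D(T';n)$ and $D(T;n)$ coincide up to this identification. The underlying point is that the operation ``adjoin an ascent vertex above the root'' is already absorbed by the construction $G(\,\cdot\,;n)$.

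First I would spell out the two trees. By hypothesis $T'$ is obtained from $T$ by adding a single ascent vertex $u$ as the parent of the root $r$ of $T$. Since the root of any tree is never a descent and $u$ is prescribed to be an ascent, the descent set is unchanged, $\descset(T') = \descset$, and $T'$ has size $\order+1$. Consequently $G(T';n)$ is built by stacking a chain of $n-(\order+1) = n-\order-1$ ascent vertices above $u$, whereas $G(T;n)$ is built by stacking a chain of $n-\order$ ascent vertices above $r$.

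The key observation is that in $G(T';n)$ the vertex $u$ is itself an ascent sitting directly above $r$; together with the $n-\order-1$ chain vertices stacked above it, it forms a chain of $(n-\order-1)+1 = n-\order$ ascent vertices whose lowest vertex is the parent of $r$. This is exactly the chain adjoined to $r$ in $G(T;n)$, and below $r$ the two graphs are identical copies of $T$ carrying the identical distinguished set $\descset$. Hence the map fixing $T$ and matching the two ascent chains is an isomorphism of rooted trees with distinguished descent sets from $G(T';n)$ to $G(T;n)$.

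Because $\des{T}{n}$ depends only on the rooted tree $G(T;n)$ together with its descent set $\descset$, this isomorphism induces a bijection between $D(T;n)$ and $D(T';n)$ for every $n \ge \order+1$, giving $\des{T}{n} = \des{T'}{n}$ for all such $n$; since both sides are polynomials in $n$ by Corollary~\ref{cor:degree}, the identity holds as polynomials. I do not expect any genuine obstacle here: the only care required is the bookkeeping of chain lengths and the check that every vertex of the adjoined chain, including $u$, is an ascent, so that the descent sets of $G(T';n)$ and $G(T;n)$ really do coincide. (The same conclusion can be read off from Proposition~\ref{prop:explicit}, since adjoining $u$ changes none of the subtree-decomposition data used there, but the structural bijection is the cleanest route.)
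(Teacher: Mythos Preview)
Your proposal is correct and follows exactly the paper's approach: the paper's proof is the single observation that $G(T;n) = G(T';n)$ for all $n > \order$, which is precisely the structural identification you spell out in detail. Your extra bookkeeping on chain lengths and the polynomial extension via Corollary~\ref{cor:degree} simply makes explicit what the paper leaves implicit.
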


\begin{proof}
This follows from the fact that $G(T; n) = G(T'; n)$ for all $n >s$, the size of $T$.
\end{proof}

\begin{cor} \label{cor:a-positivity-2}
Let $T$ be a tree of size $\order$ such that the root $r \in \descset$. Then 
\begin{equation*}
    \des{T}{n} = \sum_{k=1}^{\order} a_k(T) \binom{n-\order}{k}
\end{equation*}
where the coefficient $a_k(T)$ is the number of labelings $w$ in $D(T;2\order)$ such that $\{w_1, \dots, w_{\order} \} \cap [\order+1,2\order] = [\order+1, \order+k]$, where $w_1, \dots, w_{\order}$ are the labels of the vertices of $T$.

\end{cor}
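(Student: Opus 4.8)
The plan is to reduce the statement to Theorem~\ref{thm:a-positivity} by grafting a vertex above the root. Since the hypothesis $r \in \descset$ is exactly what disqualifies $T$ from Theorem~\ref{thm:a-positivity}, I would repair this by forming the tree $T'$ obtained from $T$ by attaching a single ascent vertex $r'$ as a new parent of $r$. Then $r'$ is the root of $T'$, it is an ascent, and its unique child $r$ is a descent, so $T'$ satisfies the hypotheses of Theorem~\ref{thm:a-positivity}. The size of $T'$ is $\order + 1$, so the theorem yields $\mathrm{deg}(\des{T'}{n}) = \order$ together with the expansion $\des{T'}{n} = \sum_{k=1}^{\order} a_k(T') \binom{n - \order}{k}$, where $a_k(T')$ counts the labelings $w \in D(T';2\order)$ with $\{w_1,\dots,w_{\order}\}\cap[\order+1,2\order]=[\order+1,\order+k]$ and $w_1,\dots,w_{\order}$ the labels of the vertices of $T'$ in $V(T')\setminus\{r'\}$.

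Next I would invoke Lemma~\ref{lem:plus-vertex-at-the-root}, which gives $\des{T}{n} = \des{T'}{n}$ immediately (consistent with Corollary~\ref{cor:degree}, since $r$ being the unique maximal descent vertex forces $\mathrm{deg}(\des{T}{n}) = h_r = \order$). Hence the expansion obtained for $T'$ is literally an expansion for $T$ in the same basis $\{\binom{n-\order}{k}\}$, and it remains only to check that the combinatorial description of $a_k(T')$ delivered by Theorem~\ref{thm:a-positivity} coincides with the description of $a_k(T)$ asserted in the corollary.

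The key observation for this identification is that the two padded trees agree as decorated trees: $G(T';2\order) = G(T;2\order)$. Indeed, $G(T';2\order)$ stacks a chain of $2\order-(\order+1)=\order-1$ ascent vertices on top of $r'$, which together with $r'$ forms a chain of $\order$ ascent vertices above $r$, whereas $G(T;2\order)$ stacks a chain of exactly $\order$ ascent vertices above $r$; in both cases $r$ is a descent and all the added vertices are ascents, so the underlying trees and their descent sets match. Consequently $D(T';2\order) = D(T;2\order)$, and since $V(T')\setminus\{r'\}=V(T)$, the labels $w_1,\dots,w_{\order}$ in the two definitions refer to the very same vertices. Therefore the selection condition $\{w_1,\dots,w_{\order}\}\cap[\order+1,2\order]=[\order+1,\order+k]$ picks out the same labelings, giving $a_k(T') = a_k(T)$ and hence the claimed formula.

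I do not expect a serious obstacle here; the entire content is the reduction via Lemma~\ref{lem:plus-vertex-at-the-root}, and the only point demanding care is the bookkeeping that $G(T';2\order)$ and $G(T;2\order)$ are the same decorated tree (same underlying tree \emph{and} same set of descent vertices). That equality is what forces the two families of counted labelings, and thus the two coefficient sequences, to coincide, so I would state it explicitly rather than leave it implicit.
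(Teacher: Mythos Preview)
Your proposal is correct and follows exactly the paper's approach: form $T'$ by adding an ascent vertex above the root, invoke Lemma~\ref{lem:plus-vertex-at-the-root} to identify $\des{T}{n}$ with $\des{T'}{n}$, and apply Theorem~\ref{thm:a-positivity} to $T'$. The paper's proof is a one-line version of what you wrote; your additional bookkeeping (that $G(T';2\order)=G(T;2\order)$ as decorated trees and $V(T')\setminus\{r'\}=V(T)$) makes explicit the identification of $a_k(T')$ with $a_k(T)$ that the paper leaves to the reader.
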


\begin{proof}
Let $T^{\prime}$ be the tree $T$ with an ascent vertex attached on top of the root. Then the result follows by Lemma~\ref{lem:plus-vertex-at-the-root} and application of Theorem~\ref{thm:a-positivity} to $T'$.
\end{proof}


For the trees considered in Theorem~\ref{thm:a-positivity} and Corollary~\ref{cor:a-positivity-2}, the description of the coefficients $a_k(T)$ implies that they are non-negative. This property does not hold for general trees. For example, for the tree $T$ in Figure~\ref{fig:a-positivity-counter} we have
$\des{T}{n} = 560 \binom{n-7}{7} +3800 \binom{n-6}{6} + 10120  \binom{n-5}{5} + 12160 \binom{n-4}{4} + 3150 \binom{n-3}{3} - 3150 \binom{n-2}{2} - 3150 \binom{n-1}{1} - 3150 \binom{n}{0}.$ 

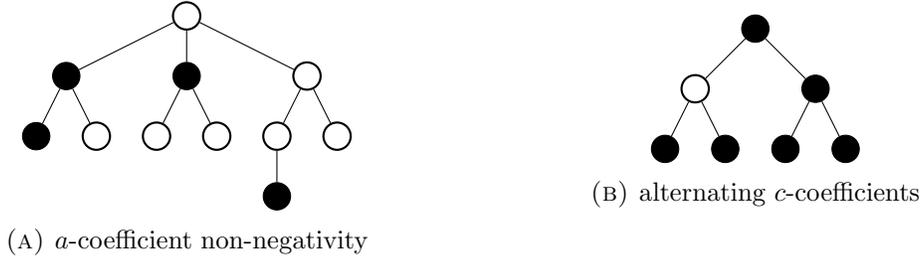
\begin{figure}[htb]
    \centering
    \begin{subfigure}[t]{0.45\textwidth}
    \centering        
    \begin{tikzpicture}[level distance = .8cm, sibling distance=.8cm, nodes={draw, circle}, -,
		        ascent/.style={circle, draw=black, fill=white, text=black, thick, radius=0.3},
		        descent/.style={circle, draw=black, fill=black, text=white, radius=0.3},
		        ]
            \node[ascent] {}
                child { node [descent] {} 
                    child { node [descent]  {} }
                    child { node [ascent] {} }
                }
                child[missing]
                child {node [descent] {} 
                    child { node [ascent] {} }
                    child { node [ascent] {} }
                }
                child[missing]
                child {node [ascent] {} 
                    child { node [ascent] {}
                        child { node [descent]{}} }
                    child { node [ascent] {} }
                };
 
        \end{tikzpicture}
    \caption{$a$-coefficient non-negativity}    
    \label{fig:a-positivity-counter}
    \end{subfigure}
    \begin{subfigure}[b]{0.45\textwidth}
    \centering
    \begin{tikzpicture}[level distance = .8cm, sibling distance=.8cm, nodes={draw, circle}, -,
		        ascent/.style={circle, draw=black, fill=white, text=black, thick, radius=0.3},
		        descent/.style={circle, draw=black, fill=black, text=white, radius=0.3},
		        ]
            \node[descent] {}
                    child { node [ascent]  {} 
                        child { node [descent] {} }
                        child { node [descent] {} }}
                    child [missing]
                    child { node [descent]  {} 
                        child { node [descent] {} }
                        child { node [descent] {} }
                };
        \end{tikzpicture}
        \caption{alternating $c$-coefficients}
        \label{fig:c-basis-counter}
        \end{subfigure}
    \caption{Trees for counterexamples}    

\end{figure}


 However, generalizing the result about permutations, we prove that the sequence $a_k(T)$ is log-concave in the cases covered by Theorem~\ref{thm:a-positivity} and Corollary~\ref{cor:a-positivity-2}. We follow the approach in~\cite{bencs2021some} and temporarily shift our focus to expansions in another binomial basis. Namely, the set
\[\bar{a} = \left\{\binom{n-\degree-1}{0} ,  \binom{n-\degree}{1}, \dots, \binom{n-1}{\degree} \right\}\] is another basis for the polynomials of degree up to $\degree$ and, therefore, $\des{T}{n}$  can be uniquely expanded as

\begin{equation*}
    \des{T}{n} = \abar{-1} \binom{n-\degree-1}{0} + \abar{0} \binom{n-\degree}{1} + \cdots + \abar{\degree-1} \binom{n-1}{\degree} \text{.}
\end{equation*}
Although any polynomial can be written in this basis, we restrict our following discussion to the classes of trees for which we've proven the nonnegativity of $a_k(T)$. For the remainder of this section $T$ is a tree in which  root is not a descent vertex but all of its children are. Note that
\begin{equation*}
    \des{T}{\degree} = \abar{-1} \binom{-1}{0} + \sum_{k=1}^{\degree} \binom{k}{k+1} = \abar{-1}.
\end{equation*} As we prove in Section~\ref{sec:roots}, where we discuss the roots of $\des{T}{n}$, for these trees we have $\des{T}{\degree}=0$ and, therefore $\abar{-1}=0$. We will show that
the sequence $\{\abar{k}\}_{k=0}^{\degree}$ counts certain linear extensions of a poset $P_T$ associated to the tree $T$. Before we define $P_T$, we introduce some poset terminology we will need.

Let $P$ be a finite poset and $v \in P$ a fixed element. Let $Ext(P)$ be the set of order-preserving bijections from $P$ to the chain $[1, 2, \dots, \size{P}]$. The height polynomial of $v$ in $P$ is defined as

\begin{equation*}
    h_{P,v}(x) = \sum_{\phi \in Ext(P)} x^{\phi(v)-1} = \sum_{k=0}^{\size{P}-1} h_k(P,v) x^k.
\end{equation*}
So, $h_k(P,v)$ is the number of linear extensions  of $P$ in which $v$ is labeled $k+1$.

If the root of $T$ is not in $\descset$, but all of its children are, then $\mathrm{deg}(\des{T}{n}) = h = \size{V}-1$.  Let  $V = \{v_1, \dots, v_{h+1}\}$. Then $P_T$ is a poset on $\{u_1, \dots, u_{\degree+1}\}$ defined in the following way.  If $v_i$ is the child of $v_j$, then $u_i > u_j$ if $u_i \in \descset$ and $u_i < u_j$ if $u_i \notin \descset$.  An example of this construction is shown in Figure~\ref{fig:tree-to-poset}. The connection between the poset $P_T$ and the coefficients $\bar{a}_k$ is given in the folowing proposition.

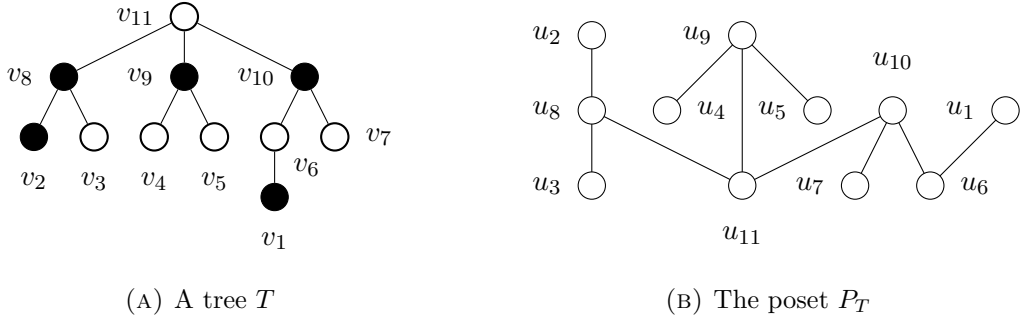
\begin{figure}[t]
    \centering
    \begin{subfigure}[b]{0.45\textwidth}
    \centering        \begin{tikzpicture}[level distance = .8cm, sibling distance=.8cm, nodes={draw, circle}, -,
		        ascent/.style={circle, draw=black, fill=white, text=black, thick, radius=0.3},
		        descent/.style={circle, draw=black, fill=black, text=white, radius=0.3},
		        ]
            \node[ascent, label=left:$v_{11}$] {}
                child { node [descent, label=left:$v_8$] {} 
                    child { node [descent, label=below:$v_2$]  {} }
                    child { node [ascent, label=below:$v_3$] {} }
                }
                child[missing]
                child {node [descent, label=left:$v_9$] {} 
                    child { node [ascent, label=below:$v_4$] {} }
                    child { node [ascent, label=below:$v_5$] {} }
                }
                child[missing]
                child {node [descent, label=left:$v_{10}$] {} 
                    child { node [ascent, label=320:$v_6$] {}
                        child { node [descent, label=below:$v_1$]{}} }
                    child { node [ascent, label=right:$v_7$] {} }
                };
 
        \end{tikzpicture}
    \caption{A tree $T$}    
    \end{subfigure}
    \begin{subfigure}[b]{0.45\textwidth}
    \centering
    \begin{tikzpicture}[scale=0.5, nodes={draw=black, fill=white, circle}]
        \draw (0,0) -- (0,4) -- (-2,2); 
        \draw (0,4) -- (2,2); 
        \draw (0,0) -- (-4,2) -- (-4,4); 
        \draw (-4,2) -- (-4,0); 
        \draw (0,0) -- (4,2) -- (5,0) -- (7,2); 
        \draw (4,2) -- (3,0); 
        
        \node[label=below:$u_{11}$] at (0,0) {};
        \node[label=left:$u_9$] at (0,4) {};
        \node[label=right:$u_4$] at (-2,2) {};
        \node[label=left:$u_5$] at (2,2) {};
        \node[label=above:$u_{10}$] at (4,2) {};
        \node[label=left:$u_7$] at (3,0) {};
        \node[label=right:$u_6$] at (5,0) {};
        \node[label=left:$u_1$] at (7,2) {};
        \node[label=left:$u_8$] at (-4,2) {};
        \node[label=left:$u_2$] at (-4,4) {};
        \node[label=left:$u_3$] at (-4,0) {};
        
    \end{tikzpicture}
    
    \caption{The poset $P_T$}
    \label{fig:poset}
    \end{subfigure}
    
    \caption{The poset $P_T$ associated to a tree $T$ with a marked descent set $\descset$}
    \label{fig:tree-to-poset}
\end{figure}

\begin{prop} \label{prop:a-bar-positivity} Let $T$ be a tree with descent set $\descset$ such that the root of $T$ is not in $\descset$, but all of its children are. Then for $0 \leq k \leq \degree-1$
\begin{equation*}
    \abar{k+1} = h_{\degree-k}(P_T,u_{\degree+1}) \text{.}
\end{equation*}
\end{prop}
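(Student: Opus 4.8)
The plan is to give a direct, $P$-partition–style bijection that writes $\des{T}{n}$ as a sum over the linear extensions of $P_T$, and then to read off the $\bar a$-coefficients by recognizing the resulting binomials as the $\bar a$-basis vectors. Throughout I write $\order=\degree+1$; for these trees $\mathrm{deg}(\des{T}{n})=\order-1$ by Corollary~\ref{cor:degree}, since the children of the root are exactly the maximal descent vertices and the subtrees rooted at them partition $V\setminus\{r\}$, so $\degree=\sum h_c=\order-1$. Let $r=v_{\degree+1}$ be the root, corresponding to $u_{\degree+1}\in P_T$.

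First I would set up the correspondence. Fix $w\in D(T;n)$. Since every chain vertex and the root are ascents, the chain labels increase from bottom to top and $w(r)$ is smaller than the bottom chain label; hence the chain receives some $(n-\order)$-subset $B\subseteq[n]$ in its unique increasing arrangement, and the $\order$ vertices of $T$ receive $A=[n]\setminus B$. The key observation is that $w$ restricted to $T$ has descent set exactly $\descset$ if and only if the assignment $u_i\mapsto w(v_i)$ is order-preserving from $P_T$ onto $A$: for $v_i$ a child of $v_j$, the defining inequality $u_i>u_j$ (resp. $u_i<u_j$) in $P_T$ is precisely the condition $w(v_i)>w(v_j)$ (resp. $<$) making $v_i$ a descent (resp. ascent). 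Thus $w|_T$ is an order-preserving bijection $P_T\to A$, equivalently a linear extension $\phi\in Ext(P_T)$ followed by the order isomorphism $[\order]\to A$, and the only constraint coupling the two parts is the single root inequality $w(r)<\min(B)$.

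Next I would convert that inequality into a counting condition. If $\phi$ places the root at position $t:=\phi(u_{\degree+1})$, then $w(r)$ is the $t$-th smallest element of $A$, and a short check shows $w(r)<\min([n]\setminus A)$ holds exactly when $\{1,\dots,t\}\subseteq A$. For fixed $\phi$ the admissible sets $A$ are therefore the $\order$-subsets of $[n]$ containing $\{1,\dots,t\}$, of which there are $\binom{n-t}{\order-t}$. Grouping linear extensions by root position, and using that $h_{t-1}(P_T,u_{\degree+1})$ counts those with $\phi(u_{\degree+1})=t$, gives
\[
    \des{T}{n}=\sum_{t=1}^{\order} h_{t-1}(P_T,u_{\degree+1})\binom{n-t}{\order-t}.
\]
Since $\binom{n-t}{\order-t}=\binom{n-\degree-1+\ell}{\ell}$ with $\ell=\order-t=\degree+1-t$, this is exactly the expansion of $\des{T}{n}$ in the basis $\bar a$; matching coefficients term by term yields the claimed identification of the $\abar{\,\cdot\,}$ with the height numbers $h_{\,\cdot\,}(P_T,u_{\degree+1})$. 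As a consistency check, the $\ell=0$ term gives $\abar{-1}=h_{\degree}(P_T,u_{\degree+1})$, which vanishes because the root lies below all of its (descent) children in $P_T$ and so can never occupy the top position, recovering $\abar{-1}=0$.

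The main obstacle is the first step: proving rigorously that the interaction between the tree part and the appended chain collapses to the single root inequality. Concretely I must verify that, once $A$ and $\phi$ are chosen, placing $B=[n]\setminus A$ on the chain in increasing order creates no spurious descents or ascents anywhere on the chain (they are automatically all ascents, independently of \emph{which} complementary labels land there), and that $w\mapsto(\phi,A)$ is a genuine bijection onto the pairs with $\{1,\dots,\phi(u_{\degree+1})\}\subseteq A$. Everything after that is bookkeeping: the position-to-subscript translation $t\leftrightarrow h_{t-1}$ for the height polynomial and the recognition of $\binom{n-t}{\order-t}$ as a $\bar a$-basis vector.
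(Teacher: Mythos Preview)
Your argument is correct and is essentially the paper's own proof, just packaged slightly differently. The paper partitions $D(T;n)$ according to the actual label $k+1$ of the root, setting $B_k(T;n)=\{w:w(v_{\degree+1})=k+1\}$, and then builds a bijection $B_k(T;n)\to B_k(T;\degree+1)\times\binom{[k+2,n]}{\degree-k}$ via standardization of $w|_T$ together with the set $V_w$ of ``large'' tree labels; this yields $\des{T}{n}=\sum_k h_k(P_T,u_{\degree+1})\binom{n-k-1}{\degree-k}$. Your decomposition $w\mapsto(\phi,A)$ is the same bijection in disguise: because the chain labels exceed $w(r)$, one has $\{1,\dots,w(r)\}\subseteq A$, so the root's label value $k+1$ coincides with its rank $t=\phi(u_{\degree+1})$ inside $A$, and your free choice of $A\supseteq\{1,\dots,t\}$ is exactly the paper's free choice of $V_w\subseteq[k+2,n]$. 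The only stylistic difference is that you phrase the constraint as $\{1,\dots,t\}\subseteq A$ while the paper phrases it as fixing $w(r)=k+1$; both lead to the identical binomial $\binom{n-t}{\order-t}=\binom{n-k-1}{\degree-k}$ and the same term-matching against the $\bar a$-basis.
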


\begin{proof}
    Since $\des{T}{n}$ is a polynomial of degree $\degree$, we can write it uniquely as a linear combination of the basis vectors
    \begin{equation*}
        \left\{\binom{n-\degree-1}{0}, \binom{n-\degree}{1}, \dots, \binom{n-1}{\degree}\right\} \text{.}
    \end{equation*}
    So, it suffices to show that for $n \geq \degree$ we have
    \begin{equation*}
        \des{T}{n} = \sum_{k=0}^{\degree-1} h_{\degree-k}(P_T, u_{\degree+1}) \binom{n-\degree+k-1}{k} \text{.}
    \end{equation*}
    
   Let $v_{\degree + 1}$ be the root of $T$. Let $B_k(T;n) = \{w \in D(T;n) \colon w(v_{\degree + 1})=k+1\}$ for $0 \leq k \leq n-1$. By the assumption on $T$, the children of  $v_{\degree+1}$ are descents and above $v_{\degree+1}$ there is an ascending chain in $G(T; n)$. There are $n-\degree-1$ elements in the ascending chain and at least one child of $v_{\degree+1}$  must have a label higher  than $v_{\degree+1}$. Thus, $w(v_{\degree+1}) \leq \degree$ and $B_k(T;n) = \emptyset$ for $\degree \leq k \leq n-1$. So, $D(T,n)$ is a disjoint union of the sets $B_k(T;n)$ for $0 \leq k < \degree$. Additionally, note $\size{B_k(T;\degree+1)} = h_k(P_T; u_{\degree+1})$. 
    
   We claim
    \begin{equation*}
        \size{B_k(T;n)} = \size{B_k(T;\degree+1) \times \binom{[k+2, n]}{\degree-k}} = \size{B_k(T;\degree+1)}\binom{n-k-1}{\degree-k}.
    \end{equation*}

    To prove the first equality, we establish a bijection. If $w \in B_k(T;n)$, then let $V_w = \{w (v_i) \colon 1 \leq i \leq \degree \text{ and } w(v_i) > k+1\}$. Clearly, $V_w \subseteq [k+2,n]$ and  $\size{V_w} = \degree-k$. Let $w'$ be the standardization of the restriction of $w$ on the tree $T$. In other words, $w'$ uses the labels $\{1, \dots, h+1\}$ and has the property that for $v_1, v_2 \in V$, $w'(v_1) < w'(v_2)$ if and only if $w(v_1) < w(v_2)$. Then $w' \in  B_k(T; \degree+1)$.  
    
 Let $f: B_k(T;n) \to B_k(T; \degree+1) \times \binom{[k+2,n]}{\degree-k}$ be defined by
    \begin{equation*}
        f(w) = (w', V_w). 
    \end{equation*}
   Checking whether $f$ is a bijection is rather simple and left to the reader.
    
    Then, we have 
    \begin{equation*}
        \des{T}{n} = \size{D(T;n)} = \size{\bigcup_{k=0}^{\degree-1}B_k(T;n)} = \sum_{k=0}^{\degree-1} \size{B_k(T;n)} = 
    \end{equation*}
    \begin{align*}
        &= \sum_{k=0}^{\degree-1} \size{B_k(T;\degree+1) \times \binom{[k+2, n]}{\degree-k}}  \\ &=
        \sum_{k=0}^{\degree-1} \size{B_k(T;\degree+1} \binom{n-k-1}{\degree-k}  \\ &=
        \sum_{k=0}^{\degree-1} h_k(P_T, u_{\degree+1}) \binom{n-k-1}{\degree-k} \\ &=
        \sum_{l=1}^{\degree} h_{\degree-l}(P_T, u_{\degree+1})  \binom{n-\degree+l -1}{l}.
    \end{align*}   
\end{proof}

The following properties of the sequence $\{h_k(P,v)\}_{k=1}^{\size{P}}$ follow from two results of Stanley~\cite{stanley1981two, stanley1986two}.
\begin{thm}\cite[Theorem 2.3]{bencs2021some}\label{thm:bencs-posets}
Let $P$ be a finite poset, and $v \in P$ be fixed. Then the coefficient sequence $\{h_k(P,v)\}_{k=1}^{\size{P}}$ is log concave. Moreover if all comparable elements with $v$ are bigger than $v$ in $P$, then $\{h_k(P,v)\}_{k=1}^{\size{P}}$ is a decreasing, log-concave sequence. 
\end{thm}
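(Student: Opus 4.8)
The plan is to deduce both assertions from Stanley's results on counting linear extensions by the position of a single element. For a fixed $v \in P$, set
\[
N_i = \size{\{\phi \in Ext(P) \colon \phi(v) = i\}}, \qquad 1 \leq i \leq \size{P},
\]
so that $h_k(P,v) = N_{k+1}$. It therefore suffices to show that the sequence $(N_i)_i$ is log-concave in general, and that it is moreover (weakly) decreasing in $i$ when every element of $P$ comparable to $v$ is larger than $v$; the stated indexing $h_k(P,v) = N_{k+1}$ then converts ``$N_{i}$ decreasing in $i$'' into the desired monotonicity of $\{h_k(P,v)\}$.

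For the log-concavity I would invoke Stanley's theorem obtained from the Aleksandrov--Fenchel inequalities \cite{stanley1981two}: over all linear extensions of a finite poset, the number that place a fixed element in each successive position forms a log-concave sequence (indeed a P\'olya frequency sequence, hence with no internal zeros). The mechanism is to realize the numbers $N_i$, up to the common normalizing factor, as mixed volumes of a family of order polytopes built from $(P,v)$, whereupon the Aleksandrov--Fenchel inequality delivers log-concavity of the mixed-volume sequence and thus of $(N_i)_i$. This is the deep input and I would cite it rather than reprove it.

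For the decreasing refinement, assume every element comparable to $v$ exceeds $v$, and fix $j$ with $1 \leq j < \size{P}$. I would exhibit an injection from $\{\phi \colon \phi(v) = j+1\}$ into $\{\phi \colon \phi(v) = j\}$, which yields $N_{j+1} \leq N_j$. Given $\phi$ with $\phi(v) = j+1$, let $u = \phi^{-1}(j)$ be the element immediately below $v$. If $u$ were comparable to $v$, then $u > v$ by hypothesis, forcing $\phi(u) > \phi(v)$, which contradicts $\phi(u) = j < j+1$; hence $u$ and $v$ are incomparable. Exchanging the values of $u$ and $v$ (moving $v$ to position $j$ and $u$ to position $j+1$) then produces a genuine linear extension $\phi'$ with $\phi'(v) = j$, because only two adjacent incomparable elements swap ranks and all other order relations are preserved. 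The map is reversible, since $u$ is recovered as the element at position $j+1$ in $\phi'$, so it is injective. Thus $(N_i)_i$ is weakly decreasing, and together with the log-concavity this establishes the second claim. (The monotonicity can alternatively be extracted from the order/chain polytope correspondence of \cite{stanley1986two}, but the swap argument is cleaner and self-contained.)

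The main obstacle is the log-concavity: it genuinely rests on the Aleksandrov--Fenchel inequalities, with no known elementary or bijective proof, so this step must be imported from \cite{stanley1981two}. The decreasing part, by contrast, is elementary, and its only delicate point is recognizing that the hypothesis ``all comparable elements exceed $v$'' is precisely what forces the element directly below $v$ to be incomparable to it, thereby making the adjacent transposition a legal operation on linear extensions.
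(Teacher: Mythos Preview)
The paper does not prove this theorem; it is quoted from \cite{bencs2021some} with the remark that it ``follow[s] from two results of Stanley~\cite{stanley1981two, stanley1986two}.'' Your proposal is consistent with that attribution: you correctly import the log-concavity of $(N_i)_i$ from Stanley's Aleksandrov--Fenchel argument \cite{stanley1981two}, which is exactly the non-elementary input the paper points to.

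Where you go beyond the paper is in the monotonicity claim. The paper (and, implicitly, Bencs) gestures at \cite{stanley1986two}, whereas you supply a direct adjacent-transposition injection. That argument is correct: the hypothesis that every element comparable to $v$ lies above $v$ forces $\phi^{-1}(j)$ to be incomparable to $v$ whenever $\phi(v)=j+1$, so swapping those two adjacent values preserves all order relations, and the swap is clearly injective. This is a cleaner, self-contained route to the decreasing statement than invoking the order/chain polytope machinery, and it has the added benefit of making the role of the hypothesis transparent. So your treatment is at least as informative as what the paper offers, and for the second assertion it is genuinely more explicit.
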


Combining Theorem~\ref{thm:bencs-posets} and Proposition~\ref{prop:a-bar-positivity}, we directly get the following corollary.

\begin{cor} \label{cor:abar}
   Let $T$ be a tree with descent set $\descset$ such that either its root is a descent or all the children of its root are descents, then the sequence $\abar{0}, \abar{1}, \dots, \abar{\degree-1}$ is an increasing, log-concave sequence of nonnegative integers.
\end{cor}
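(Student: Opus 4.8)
The plan is to reduce both cases in the hypothesis to the single situation handled by Proposition~\ref{prop:a-bar-positivity}---root an ascent, all children descents---and then to read off the conclusion from Theorem~\ref{thm:bencs-posets} applied to the poset $P_T$ at the element $u_{\degree+1}$ corresponding to the root.

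First I would dispose of the case in which the root $r$ of $T$ is a descent. Let $T'$ be $T$ with an ascent vertex attached as a new parent above $r$. By Lemma~\ref{lem:plus-vertex-at-the-root} we have $\des{T}{n} = \des{T'}{n}$, so the two trees have identical $\bar a$-expansions and hence identical coefficient sequences $\abar{k}$. The new root of $T'$ is an ascent whose unique child $r$ is a descent, so $T'$ falls under Proposition~\ref{prop:a-bar-positivity}. Thus in both cases of the corollary we may assume the root is an ascent with all children descents, so that Proposition~\ref{prop:a-bar-positivity} gives $\abar{k+1} = h_{\degree-k}(P_T, u_{\degree+1})$ for $0 \le k \le \degree-1$; that is, the sequence $\{\abar{k}\}$ is, up to reversing the index, the height sequence $\{h_j(P_T, u_{\degree+1})\}_j$.

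Next I would invoke Theorem~\ref{thm:bencs-posets} with $v = u_{\degree+1}$. The theorem gives log-concavity unconditionally, and it yields the stronger conclusion that $\{h_j(P_T, u_{\degree+1})\}$ is decreasing provided every element comparable to $u_{\degree+1}$ is larger than it. This is the single substantive point, and it is where I expect the only real work to lie. In the construction of $P_T$, a child $v_i$ of $v_j$ satisfies $u_i > u_j$ precisely when $v_i \in \descset$. Since $v_{\degree+1}$ is the root it has no parent, so the only cover relations incident to $u_{\degree+1}$ come from its children; by hypothesis all of these are descents, hence each corresponding $u$ satisfies $u > u_{\degree+1}$. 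Therefore $u_{\degree+1}$ is a minimal element of $P_T$, nothing lies below it, and every element comparable to it is larger. Theorem~\ref{thm:bencs-posets} then gives that $\{h_j(P_T, u_{\degree+1})\}$ is a decreasing, log-concave sequence of nonnegative integers (they count linear extensions).

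Finally I would translate back. Since $\abar{k+1} = h_{\degree-k}(P_T, u_{\degree+1})$, the $\bar a$ sequence is the reversal of a decreasing sequence, hence increasing; log-concavity, nonnegativity, and integrality are all preserved under reversal. This yields exactly the assertion that $\abar{0}, \abar{1}, \dots, \abar{\degree-1}$ is an increasing, log-concave sequence of nonnegative integers. I emphasize that the main obstacle is not computational but structural: once the minimality of $u_{\degree+1}$ in $P_T$ is established, the corollary is an immediate consequence of Proposition~\ref{prop:a-bar-positivity} and Theorem~\ref{thm:bencs-posets}.
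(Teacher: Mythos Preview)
Your proposal is correct and follows exactly the approach the paper has in mind: the paper's proof is the single sentence ``Combining Theorem~\ref{thm:bencs-posets} and Proposition~\ref{prop:a-bar-positivity}, we directly get the following corollary,'' and you have spelled out precisely the two implicit details---the reduction of the descent-root case via Lemma~\ref{lem:plus-vertex-at-the-root}, and the verification that $u_{\degree+1}$ is minimal in $P_T$ so that the ``moreover'' clause of Theorem~\ref{thm:bencs-posets} applies. There is nothing to add or correct.
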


We are now ready to go back to the sequence $a_0(T), a_1(T), \dots, a_{\degree}(T)$.

\begin{thm} \label{a-log-concavity}
    Let $T$ be a tree with descent set $\descset$ such that either its root is a descent or all the children of its root are descents, then the sequence $a_0(T), a_1(T), \dots, a_{\degree}(T)$ is a log-concave sequence of nonnegative integers.
\end{thm}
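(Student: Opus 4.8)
The plan is to transport the log-concavity of the $\bar a$-sequence, already established in Corollary~\ref{cor:abar}, across the change of basis relating the $a$-expansion and the $\bar a$-expansion of $\des{T}{n}$. By Corollary~\ref{cor:abar}, under either hypothesis on $T$ the sequence $\abar{0}, \abar{1}, \dots, \abar{\degree-1}$ is nonnegative and log-concave (and, being increasing, has no internal zeros). So it suffices to write down the explicit linear relation expressing the $a_k(T)$ in terms of the $\abar{i}$, and then to show that this particular relation preserves log-concavity.

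First I would make the change of basis explicit. Writing $x = n - \degree$, the $a$-basis vectors are $\binom{x}{k}$ while the $\bar a$-basis vectors are $\binom{x+j-1}{j}$ for $0 \le j \le \degree$, so that $\des{T}{n} = \sum_{k} a_k(T)\binom{x}{k} = \sum_{j} \abar{j-1}\binom{x+j-1}{j}$. Applying the Vandermonde identity $\binom{x+j-1}{j} = \sum_{k}\binom{j-1}{k-1}\binom{x}{k}$ and comparing coefficients of $\binom{x}{k}$ gives
\[
 a_k(T) = \sum_{i \ge k-1}\binom{i}{k-1}\abar{i},
\]
equivalently $a_{k+1}(T) = \sum_{i}\binom{i}{k}\abar{i}$. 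In particular $a_0(T) = \abar{-1} = 0$, consistent with the fact that $D_0(T;n) = \emptyset$.

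The second step is to recognize this transform. If $f(y) = \sum_i \abar{i}\,y^i$, then $\sum_k a_{k+1}(T)\,y^k = f(1+y)$, so passing from $(\abar{i})$ to $(a_{k+1}(T))$ is the binomial (Pascal) transform, equivalently a Taylor shift by $1$. I would then invoke the fact that this transform carries a nonnegative log-concave sequence with no internal zeros to another such sequence; this is precisely the ingredient used by Bencs~\cite{bencs2021some}, and at bottom it reflects the total positivity of the Pascal matrix $\bigl(\binom{i}{k}\bigr)$. Applying it to the sequence $(\abar{i})_{i=0}^{\degree-1}$ from Corollary~\ref{cor:abar} shows that $a_1(T), \dots, a_{\degree}(T)$ is log-concave and nonnegative; since $a_0(T) = 0$, the boundary inequality $a_0(T)\,a_2(T) \le a_1(T)^2$ holds trivially, and so the full sequence $a_0(T), \dots, a_{\degree}(T)$ is log-concave, as claimed.

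The heart of the argument, and the step I expect to be the main obstacle, is the preservation of log-concavity under the binomial transform. I would either cite the corresponding lemma of Bencs directly, or give a self-contained proof at the level of an arbitrary nonnegative log-concave sequence $(c_i)$ with no internal zeros: setting $b_k = \sum_i \binom{i}{k}c_i$, one must verify $b_{k-1}b_{k+1} \le b_k^2$, which can be done either by expanding both sides and matching terms through a Cauchy--Schwarz or rearrangement estimate, or by appealing to the factorization of the Pascal matrix into elementary totally positive bidiagonal factors together with the classical fact that a totally positive transform preserves the P\'olya-frequency (log-concavity) property. One minor bookkeeping point is that the two hypotheses on $T$ --- root a descent, or all children of the root descents --- need no separate treatment, since both are already subsumed by Corollary~\ref{cor:abar} and the change-of-basis computation depends only on $\degree = \deg \des{T}{n}$.
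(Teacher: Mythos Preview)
Your approach is essentially the same as the paper's: both establish the identity $a(T,x)=x\,\bar a(T,x+1)$ (you derive it directly via Vandermonde, the paper cites Bencs's Proposition~3.3 and observes its proof is structure-independent), then invoke Corollary~\ref{cor:abar} together with the preservation of log-concavity under the Taylor shift $y\mapsto y+1$ (the paper cites Brenti~\cite{brenti1989unimodal} for this; you offer to cite Bencs or prove it from Pascal-matrix total positivity). Your explicit handling of the boundary term $a_0(T)=\abar{-1}=0$ is a small bonus over the paper's version, but otherwise the two proofs coincide.
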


\begin{proof}  Let \begin{align*}
    a(T,x) &= \sum_{k=0}^ha_k(T)x^k\\
    \bar{a}(T,x) &= \sum_{k=0}^{h-1}\abar{k}x^k.\\
\end{align*}  Proposition 3.3 of~\cite{bencs2021some} states that \begin{equation} \label{rel} a(T,x) = x\bar{a}(T,x+1)
\end{equation}
for a nonempty tree $T$ which is chain but the proof of this fact does not depend on the tree $T$. It only depends on the fact that the coefficients of $a(T,x)$ and $\bar{a}(T,x)$ come from expansions of the same polynomial $\des{T}{n}$.  By Corollary~\ref{cor:abar} we know that the coefficient sequence of the polynomial $a(T, x)$ is log-concave, and consequently
has no internal zeros. It is known that this implies that the coefficient sequence of the polynomial $a(T, x + 1)$ is also log-concave~\cite{brenti1989unimodal}. Since multiplication
with an $x$ only shifts the coefficient sequence, the coefficient sequence of $xa(T, x + 1) = a(T, x)$ is also log-concave.
\end{proof}

In~\cite{diaz2019descent}, the expansion $d(T;n)$ into the basis

\begin{equation*}
    c = \left\{ \binom{n+1}{0}, \binom{n+1}{1}, \dots, \binom{n+1}{\degree} \right\}
\end{equation*}
when $T$ is a chain is also considered. Let \[ \des{T}{n} = \sum_{k=0}^h c_k(T) \binom{n+1}{k}.\] It was conjectured in~\cite{diaz2019descent} and later proved in~\cite{bencs2021some}, that for a chain $T$, the coefficients $c_0(T),  \dots, c_h(T)$ are integers alternating in sign. 

That the coefficients are integers for general trees $T$ can be deduced using induction on the degree of $\des{T}{n}$ and the recursion in Theorem~\ref{thm:d(T;n+1)} . Namely, we have
\begin{equation}\label{eq:cint}
\des{T}{n+1} - \des{T}{n}  = \sum_{v \in V^{\prime}} \des{T/v}{n} + \sum_{v \in V^{\prime \prime}}  \sum_{r=1}^{2^{c(v)}} \des{(T/v)_r}{n}.
\end{equation} The left-hand side of~\eqref{eq:cint} is 
\[\des{T}{n+1} - \des{T}{n} = \sum_{k=0}^h c_k(T) \left(\binom{n+2}{k} - \binom{n+1}{k}\right) = \sum_{k=0}^h c_k(T) \binom{n+1}{k+1}. \] The descent polynomials on the right-hand side of~\eqref{eq:cint} are all of degree less than $h$, and therefore their coefficients in the expansion in the basis $c$ are integers, by the induction hypothesis.

However, the alternating sign property of $c_0(T),  \dots, c_h(T)$ does not extend to general trees, and not even to the tree classes for which we have shown that the $a$-coefficients are nonnegative and log-concave. We show a counterexample. Take the tree shown in Figure~\ref{fig:c-basis-counter}; its descent polynomial is $\des{T}{n} = 60 \binom{n-1}{7} - 60 \binom{n-1}{6} + 20 \binom{n-1}{5} + 44 \binom{n-1}{4} - 120 \binom{n-1}{3} + 200 \binom{n-1}{2} - 280 \binom{n-1}{1} + 360 \binom{n-1}{0}$ which does not have alternating coefficients.


\section{The roots of $\des{T}{n}$} \label{sec:roots}

It was conjectured in~\cite{diaz2019descent}, and proved in~\cite{jiradilok2019roots} and \cite{bencs2021some} that when $T$ is a chain, the degree of $\des{T}{n}$ is a bound on the roots of the polynomial, i.e, if $z \in \mathbb{C}$ is a root of $\des{T}{n}$, then $\size{z} \leq \degree$. In addition, that $\mathcal{R}(z) \geq -1$ was conjectured in~\cite{diaz2019descent} and proved in~\cite{jiradilok2019roots}. 

It is natural to ask if the bounds extend to  the roots of $\des{T}{n}$ for general trees $T$.  The answer in general is, no. For example, let $T$ be the tree in Figure~\ref{fig:bound-counter}. Its descent polynomial $\des{T}{n} = \frac{1}{3}x^3 - x^2 - \frac{58}{3}x + 80$ has roots $-8$, $5$, and $6$.

\begin{figure}[htb]
    \centering
    \begin{tikzpicture}[level distance = .8cm, sibling distance=.8cm, nodes={draw, circle}, -,
		        ascent/.style={circle, draw=black, fill=white, text=black, thick, radius=0.3},
		        descent/.style={circle, draw=black, fill=black, text=white, radius=0.3},
		        ]
            \node[ascent] {}
                    child { node [ascent]  {} 
                        child { node [ascent] {} }
                        child { node [ascent] {} }}
                    child [missing]
                    child { node [descent]  {} 
                        child { node [descent] {} }
                        child { node [ascent] {} }
                };

        \end{tikzpicture}
    
    \caption{A tree $T$ for which the roots of $\des{T}{n}$ exceed $h$}
    \label{fig:bound-counter}
\end{figure}
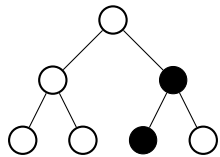

\begin{prop} \label{prop:bound}
    Let $T$ be a tree with descent set $V_D$ such that either the root of $T$ is a descent or all the children of the root are descents. Let $z \in \mathbb{C}$ such that $\des{T}{z} = 0$, then $\size{z} \leq \degree$.
\end{prop}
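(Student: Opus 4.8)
The plan is to reduce $\des{T}{n}$ to the nonnegative height-polynomial expansion already established and then localize the roots of that combination. First I would put $T$ in canonical form. If the root of $T$ lies in $\descset$, I replace $T$ by the tree $T'$ obtained by adjoining an ascent vertex as a new parent of the root; by Lemma~\ref{lem:plus-vertex-at-the-root} this does not change the descent polynomial, and $T'$ now has a root that is not a descent but all of whose children are descents. Hence in either case of the hypothesis I may assume that the root $r$ is an ascent and every child of $r$ is a descent, so that $\degree = \order - 1$ and Proposition~\ref{prop:a-bar-positivity} applies. Reading off the last display in the proof of that proposition, I would write
\begin{equation*}
    \des{T}{n} = \sum_{\ell=1}^{\degree} h_{\degree-\ell}(P_T, u_{\degree+1}) \binom{n-\degree+\ell-1}{\ell},
\end{equation*}
so that the coefficients are exactly the height-polynomial numbers of the poset $P_T$ at its marked vertex $u_{\degree+1}$.

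Next I would record two features of this expansion. On one hand, each basis polynomial $\binom{n-\degree+\ell-1}{\ell}$ factors as $\tfrac{1}{\ell!}(n-\degree)(n-\degree+1)\cdots(n-\degree+\ell-1)$, so its roots are the integers $\degree-\ell+1, \dots, \degree$, all lying in $[1,\degree]$ and hence in the disk $\size{z}\le\degree$; moreover these basis polynomials are nested, $\binom{n-\degree+\ell-1}{\ell}$ dividing $\binom{n-\degree+\ell}{\ell+1}$, with each successive factor introducing one new, smaller integer root. On the other hand, since all children of $r$ are descents, the marked vertex $u_{\degree+1}$ is a minimal element of $P_T$ and every element comparable to it is larger; therefore Theorem~\ref{thm:bencs-posets} shows that the sequence $\{h_k(P_T,u_{\degree+1})\}_k$ is decreasing and log-concave. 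Equivalently, writing $c_\ell = h_{\degree-\ell}(P_T,u_{\degree+1})$, the coefficients satisfy $0 \le c_1 \le c_2 \le \cdots \le c_{\degree}$ with $c_{\degree}>0$ and are log-concave.

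The crux is then the following analytic statement: a nonnegative combination $\sum_{\ell=1}^{\degree} c_\ell \binom{n-\degree+\ell-1}{\ell}$ whose coefficients form an increasing, log-concave sequence has all of its complex roots in $\size{z}\le\degree$. I expect this to be the main obstacle, and it is a genuine one: the basis polynomials individually have roots in the disk, but a nonnegative combination of polynomials with roots in a disk need not have its roots there, so the monotonicity and log-concavity of $\{c_\ell\}$ must be used essentially (already the case $\degree=2$ fails without $c_1\le c_2$, since the nontrivial root is $1-2c_1/c_2$). A naive estimate is too lossy: factoring out the top term $c_{\degree}\binom{n-1}{\degree}$ and bounding $\size{\binom{n-\degree+\ell-1}{\ell}\big/\binom{n-1}{\degree}}$ by the triangle inequality on $\size{z}=\degree$ only yields the factor $\degree/\ell$, which does not beat the sum, because it discards the correlated phases of the terms.

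To overcome this I would adapt the argument of Bencs~\cite{bencs2021some} for the chain. His root bound is established through exactly this height-polynomial expansion, and I expect his zero-counting argument to use only the minimality of the marked vertex and the resulting monotonicity of the coefficient sequence—properties we have just verified for $P_T$—so that it should transfer with $P_T$ in place of the chain poset. As an alternative route I would attempt a localization through a common-interlacer/compatibility argument built on the nested divisibility of the basis, promoting the monotonicity of $\{c_\ell\}$ into control of the extreme roots. In either approach the delicate point, and the step I expect to resist a short argument, is precisely the passage from the real-root information about the individual basis polynomials to a bound valid for the genuinely complex roots that $\des{T}{n}$ can possess.
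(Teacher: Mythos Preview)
Your setup is exactly the paper's: reduce to the case where the root is an ascent with all children descents, invoke Proposition~\ref{prop:a-bar-positivity} to expand $\des{T}{n}$ in the $\bar a$-basis, and use Theorem~\ref{thm:bencs-posets} to conclude that the coefficient sequence $\abar{0},\dots,\abar{\degree-1}$ is increasing. The paper then finishes in two lines rather than the open-ended ``adapt Bencs'' step you describe: it forms $p(z)=(z-1)\bar a(T,z)=\sum_{k=0}^{\degree}p_kz^k$, observes from monotonicity that $p_0,\dots,p_{\degree-1}\le 0$, $p_{\degree}>0$, and $\sum_k p_k=p(1)=0$, and then invokes Lemma~5.2 of \cite{bencs2021some}, which says precisely that under these sign/sum conditions the associated polynomial $\sum_k p_k\binom{z-\degree+k}{k}$ cannot vanish for $\lvert z\rvert>\degree$. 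Only the monotonicity of $\{\abar{k}\}$ is used; the log-concavity you record is superfluous here, and the interlacing alternative you sketch is unnecessary. So the step you flag as the main obstacle is in fact short once you know to pass to $(z-1)\bar a(T,z)$ and cite that lemma.
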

\begin{proof} Similarly as in Corollary 5.3 of~\cite{bencs2021some}, one can consider the polynomial $p(z) =(z-1)\bar{a}(T, z)$. Since, by Corollary~\ref{cor:abar}, the coefficients of $\bar{a}(T, z)$ form an increasing sequence, one can readily see that the coefficients $p_k$, except for $p_h$, of $p(z) = \sum_{k=0}^h p_kz^k$, are all non-positive. Also, their sum is 0 and
\[ \sum_{k=0}^{h-1} |p_k| = -
\sum_{k=0}^{h-1}
p_k = p_{h}  > 0.\] Therefore, by Lemma 5.2 from~\cite{bencs2021some}, if $|z| > h$, then
\[d(T, z) = \sum_{k=0}^h p_k \binom{z-m+k}{k} \neq 0.\]
\end{proof}

The integer roots of $\des{T}{n}$ are bounded for general trees as can be seen from the following result.

\begin{prop}
    If $z \in \mathbb{Z}$ and $\des{T}{z} = 0$, then $z \leq \order$ where $\order$ is the size of $T$. 
\end{prop}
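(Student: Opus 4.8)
The plan is to reduce the statement to the positivity fact already recorded in the introduction, namely that $\des{T}{n} > 0$ for every integer $n > \order$. Recall that for each integer $n \geq \order$ the polynomial value $\des{T}{n}$ coincides with the cardinality $|D(T;n)|$ of the set of labelings of $G(T;n)$ whose descent set is exactly $\descset$. In the introduction this cardinality was shown to be strictly positive for all integers $n > \order$ by the explicit construction of a valid labeling: one assigns the largest available labels to the descent vertices, working from the lowest generation upward, and then the remaining labels to the ascent vertices in the same bottom-up fashion. This is the single input the argument needs.

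Granting this, the conclusion is immediate. First I would observe that if $z \in \mathbb{Z}$ satisfies $z > \order$, then $\des{T}{z} = |D(T;z)| > 0$, so $z$ cannot be a root of the polynomial $\des{T}{n}$. Consequently every integer root $z$ must satisfy $z \leq \order$, which is exactly the claim. The boundary case $z = \order$ is consistent with the bound and needs no separate treatment: by the introduction, $\des{T}{\order} = 0$ precisely when the root of $T$ lies in $\descset$, so an integer root at $z = \order$ can indeed occur, but it still obeys $z \leq \order$.

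The only point that must be made explicit is that the combinatorially defined count and the polynomial $\des{T}{n}$ agree at all integers $n \geq \order$, so that strict positivity of the count transfers to nonvanishing of the polynomial at those integer points; this is exactly how $\des{T}{n}$ was defined, so there is no real gap to fill. In particular there is no analytic obstacle here, in contrast to the bound $\size{z} \leq \degree$ of Proposition~\ref{prop:bound}, which genuinely required control of the signs of the coefficients in the $\bar{a}$-basis. The present statement is purely a consequence of the nonnegativity of a combinatorial count together with its strict positivity for $n > \order$, and so the main (and essentially only) step is to invoke that positivity.
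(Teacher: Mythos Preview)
Your argument is correct and is essentially identical to the paper's own proof: both simply invoke the positivity $\des{T}{n} > 0$ for integers $n > \order$ established in the introduction via the explicit labeling construction, and conclude that no integer root can exceed $\order$. Your treatment of the boundary case $z = \order$ is a bit more careful than the paper's, but the core idea is the same.
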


\begin{proof}
    As discussed in Section~\ref{S:introduction}, for any integer $z \geq \order$, there is at least one labeling of $G(T;z)$ in $D(T;z)$ which forces $\des{T}{z} > 0$. Thus, any integer root of $\des{T}{n}$ must be less than or equal to $\order$.
\end{proof}

The remainder of this section is devoted to results about when certain integers are roots of $\des{T}{n}$. Recall that $\natlab{T}$ denotes the number of natural labelings of $T$.


\begin{lemma} \label{lem: des of zero}
    For any tree $T$, $\des{T}{0}= (-1)^{|\descset|} \cdot \natlab{T}$.
\end{lemma}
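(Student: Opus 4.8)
The plan is to induct on $\size{\descset}$, using the recursion of Proposition~\ref{prop:recursion}. The base case is $\descset = \emptyset$: as noted in the proof of Corollary~\ref{cor:degree}, in this case $\des{T}{n} = \natlab{T}$ is a constant, so $\des{T}{0} = \natlab{T} = (-1)^{0}\natlab{T}$, which is exactly the claim. For the inductive step I would assume the statement for every tree whose descent set has fewer than $\size{\descset}$ elements, take $\descset \neq \emptyset$, and pick any $v \in \descset$.

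The key move is to evaluate~\eqref{rec} at $n = 0$. Although~\eqref{rec} is proved combinatorially only for $n \geq \order$, both sides are polynomials in $n$ (Corollary~\ref{cor:degree}) that agree on infinitely many integers, so the identity holds for all $n$ and in particular at $n=0$, giving
\[
\des{T}{0} = \binom{0}{h_v}\cdot\des{\asc{v}_v}{h_v}\cdot\des{T\setminus T_v}{-h_v} - \des{\asc{v}}{0}.
\]
Since $v$ is a vertex of $T$ we have $h_v \geq 1$, hence $\binom{0}{h_v} = 0$, and the first term vanishes no matter what the (finite) values of the remaining two factors are. This leaves $\des{T}{0} = -\des{\asc{v}}{0}$. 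Now $\asc{v}$ has descent set $\descset \setminus \{v\}$, so $\size{\descset(\asc{v})} = \size{\descset} - 1$, and the inductive hypothesis yields $\des{\asc{v}}{0} = (-1)^{\size{\descset}-1}\natlab{\asc{v}}$. Because the hook-length count $\natlab{\cdot}$ depends only on the underlying tree shape and not on the distinguished descent set, $\natlab{\asc{v}} = \natlab{T}$, and so $\des{T}{0} = -(-1)^{\size{\descset}-1}\natlab{T} = (-1)^{\size{\descset}}\natlab{T}$, completing the induction.

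The only step that genuinely needs care is the passage from ``recursion valid for $n \geq \order$'' to ``recursion valid at $n = 0$,'' i.e.\ the polynomial-identity argument; everything else is sign bookkeeping and the observation $h_v\geq 1$. As a cross-check, I would note that the result also falls out of the explicit formula~\eqref{eq:fla}: when $\des{T}{n}$ is written as a genuine polynomial in $n$, every summand indexed by a nonempty $N \subseteq \descset$ acquires a factor $n(n-1)\cdots$ from the hook lengths of the chain sitting above the root of $G(T;n)$, which kills that summand at $n=0$, so only the $N = \emptyset$ term $(-1)^{\size{\descset}}\natlab{T}$ survives. I expect the inductive argument above to be the cleaner one to write out.
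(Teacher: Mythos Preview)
Your proof is correct and takes essentially the same approach as the paper's: both arguments use the recursion of Proposition~\ref{prop:recursion} and the observation that the $\binom{n}{h_v}$ factor contributes nothing to the constant term, leaving only the $(-1)^{|\descset|}\natlab{T}$ term after all descent vertices have been processed. The paper states this in one sentence (``fully expand the recursion; only the constant term survives''), whereas you formalize it as an induction on $|\descset|$ and are careful to justify why the combinatorial recursion, proved only for $n\geq \order$, may be evaluated at $n=0$---a point the paper leaves implicit.
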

\begin{proof}
To see why this is true, consider the  recursion from Proposition~\ref{prop:recursion} fully expanded. The only term in the expansion that is constant in $n$ is equal to $(-1)^{|\descset|} \cdot \natlab{T}$. 
\end{proof}


\begin{lemma} \label{lem: full roots}
    For a tree $T$ of size $\order$ with $\descset = V$, the roots of the polynomial $\des{T}{n}$ are $1, 2, 3, \ldots, \order$.
\end{lemma}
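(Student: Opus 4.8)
The plan is to induct on the size $\order$ of $T$, using the recursion of Theorem~\ref{thm:d(T;n+1)} specialized to the case $\descset = V$. First I would pin down the degree. Since every vertex is a descent, the root of $T$ is the only maximal descent vertex: it has no ancestors, while every other vertex has the root (which lies in $\descset = V$) as an ancestor. Hence Corollary~\ref{cor:degree} gives $\deg \des{T}{n} = h_{\text{root}} = \order$. Consequently $\des{T}{n}$ has exactly $\order$ roots counted with multiplicity, so it will suffice to exhibit $\order$ \emph{distinct} roots, namely $1, 2, \dots, \order$.

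Next I would simplify the recursion. When $\descset = V$, the set $V''$ is empty: a non-leaf vertex has all of its children in $\descset$, so it never satisfies the defining condition of $V''$. Meanwhile $V'$ is exactly the set of leaves of $T$, and for a leaf $v$ we have $T/v = T \setminus v$, which is again a tree all of whose vertices are descents. Writing $f_T(n) := \des{T}{n}$ and recalling that Theorem~\ref{thm:d(T;n+1)} is a polynomial identity (it holds for all $n \ge \order$, hence as polynomials, hence at every integer), the recursion collapses to
\[ f_T(n+1) = f_T(n) + \sum_{v \text{ a leaf of } T} f_{T \setminus v}(n). \]

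The inductive step then runs cleanly. The base case $\order = 1$ is immediate (or one may start from the empty tree with $\des{\emptyset}{n} = 1$). For $\order \ge 2$, each $T \setminus v$ has size $\order - 1$ and full descent set, so by the induction hypothesis $f_{T \setminus v}$ vanishes at $1, 2, \dots, \order - 1$. Thus the entire sum on the right vanishes for every integer $n$ with $1 \le n \le \order - 1$, giving $f_T(n+1) = f_T(n)$ for those $n$; equivalently $f_T$ takes a single common value on $\{1, 2, \dots, \order\}$. Finally, because $\descset = V$ contains the root of $T$, the observation from the introduction gives $f_T(\order) = \des{T}{\order} = 0$, so this common value is $0$ and $1, \dots, \order$ are all roots. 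These are $\order$ distinct roots of a degree-$\order$ polynomial, hence precisely its roots.

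The only step requiring genuine care — and the one most likely to hide an error — is the specialization of Theorem~\ref{thm:d(T;n+1)}: one must verify that $V'' = \emptyset$ and that deleting a leaf preserves the hypothesis ``all vertices are descents,'' so that the induction hypothesis applies verbatim to each $f_{T \setminus v}$. Everything else is bookkeeping, namely the degree count from Corollary~\ref{cor:degree} and the boundary value $\des{T}{\order} = 0$.
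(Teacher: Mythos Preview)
Your argument is correct. The specialization of Theorem~\ref{thm:d(T;n+1)} is exactly as you describe: with $\descset = V$ every non-leaf has all of its children in $\descset$, so $V'' = \emptyset$, and $V'$ consists of all leaves; deleting a leaf keeps $\descset = V$ on the smaller tree, so the induction hypothesis applies. The passage from ``holds for all $n \ge \order$'' to ``holds as polynomials'' is legitimate since every term in the recursion is a polynomial in $n$ by Corollary~\ref{cor:degree}, and the degree count together with $\des{T}{\order} = 0$ finishes the job.

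The paper takes a different, more direct route: it argues combinatorially that for any $n \ge \order$ the label $1$ must sit at the parent of the root of $T$ in $G(T;n)$, after which one chooses the $\order$ labels for $T$ from the remaining $n-1$ and arranges them in a fixed number $c$ of ways (independent of the choice), giving the closed form $\des{T}{n} = c \cdot \binom{n-1}{\order}$, from which the roots are immediate. The paper's approach is shorter, self-contained (it does not rely on Theorem~\ref{thm:d(T;n+1)}), and yields the explicit formula as a bonus, so one sees not only the roots but also the leading behavior. Your inductive approach, by contrast, showcases how the recursion of Theorem~\ref{thm:d(T;n+1)} propagates root information up through the family of full-descent trees; it is a perfectly valid alternative, just less revealing about the shape of $\des{T}{n}$ itself.
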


\begin{proof}
    We attempt to label such a tree with $[n]$. The label $1$ cannot be placed at or below the root, because all of these nodes must be larger than their parent. Therefore, it must be placed in the chain of ascents. Since the chain is arranged in ascending order, the label $1$ must be placed at the parent of the root. We can select the $\order$ labels to be placed below the 1 from the remaining $n-1$ labels. The leftovers are fixed to the vertices above the 1 in ascending order. For any selection of $\order$ vertices there is some constant number of ways to arrange them in the tree such that they are all at descent points. This constant $c$ does not depend on which $\order$ labels were selected. Therefore, 
    $$ \des{T}{n} = \binom{n-1}{\order}\cdot c$$
    The roots of this polynomial are $1, 2, 3, \ldots, \order$, as claimed.
\end{proof}


\begin{lemma} \label{lem:size root}
    For a tree $T$ of size $\order$, $\order$ is a root of $\des{T}{n}$ if and only if the root of $T$  is in $\descset$.
\end{lemma}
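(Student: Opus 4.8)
The plan is to evaluate $\des{T}{n}$ at $n=\order$ directly from its combinatorial definition. The crucial observation is that since $n-\order=0$, no chain is added above the root, so $G(T;\order)=T$, and therefore $\des{T}{\order}$ simply counts the labelings $w$ of $T$ itself with $\mathrm{Des}(w)=\descset$. Because the root $r$ of $T$ has no parent, $r$ can never satisfy $w(r)>w(p(r))$, and hence $r\notin\mathrm{Des}(w)$ for \emph{every} labeling $w$ of $T$. Both directions of the equivalence will follow from this single fact.

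First I would handle the case $r\in\descset$. Since $r\notin\mathrm{Des}(w)$ always, but $r\in\descset$, no labeling $w$ can satisfy $\mathrm{Des}(w)=\descset$; thus $D(T;\order)=\emptyset$ and $\des{T}{\order}=0$, so $\order$ is a root. For the converse, suppose $r\notin\descset$, and exhibit an explicit labeling in $D(T;\order)$ by reusing the greedy construction from Section~\ref{S:introduction}: assign the largest $\size{\descset}$ labels to the descent vertices and the remaining labels to the ascent vertices, in each group processing the vertices from the deepest generation upward, so that among vertices of the same type a deeper vertex receives a larger label (for descents) respectively a smaller label (for ascents). This produces a labeling $w$, whence $\des{T}{\order}\geq 1>0$ and $\order$ is not a root.

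The argument is essentially immediate once the identity $G(T;\order)=T$ is noted. The only step requiring care is verifying that the generation-ordered construction realizes the descent set \emph{exactly} when $r\notin\descset$: one checks the four parent--child cases according to whether a vertex and its parent are descent or ascent vertices. For a descent vertex with a descent parent, the child is deeper and so gets the larger label; for an ascent with an ascent parent, the child is deeper and gets the smaller label; for a descent with an ascent parent (or vice versa) the large-versus-small label assignment settles the inequality. The root, being an ascent with no parent, imposes no constraint. This routine case analysis, already implicit in the remark of Section~\ref{S:introduction}, completes the proof.
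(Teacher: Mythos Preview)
Your proof is correct and follows essentially the same approach as the paper, which simply appeals to the discussion in Section~\ref{S:introduction}: the identity $G(T;\order)=T$ forces $\des{T}{\order}=0$ when the root is in $\descset$ (since the root can never be a descent), while the same generation-ordered greedy labeling you describe is exactly the construction the paper sketches to show $\des{T}{\order}>0$ otherwise. Your write-up merely spells out the four parent--child cases that the paper leaves implicit.
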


\begin{proof}
    This follows from the discussion in Section~\ref{S:introduction}.
\end{proof}


\begin{thm} \label{thm:s-1 root}
    For a tree $T$ of size $\order$, $\order - 1$ is a root of $\des{T}{n}$ if and only if the root of $T$ has at least one child in $\descset$.
\end{thm}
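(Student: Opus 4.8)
The plan is to split on whether the root $\rho$ of $T$ lies in $\descset$, and in both cases to reduce the computation of $\des{T}{\order-1}$ to counting labelings in which the largest label sits at an \emph{ascent} root. The governing identity will be Theorem~\ref{thm:d(T;n+1)}, which I read as a polynomial identity and therefore evaluate at $n=\order-1$, even though that is below the natural range $n\ge\order$.

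Suppose first that $\rho\notin\descset$. Evaluating Theorem~\ref{thm:d(T;n+1)} at $n=\order-1$ gives
\[
\des{T}{\order}=\des{T}{\order-1}+\sum_{v\in V^{\prime}}\des{T/v}{\order-1}+\sum_{v\in V^{\prime\prime}}\sum_{r=1}^{2^{c(v)}}\des{(T/v)_r}{\order-1},
\]
and each tree $T/v$ and $(T/v)_r$ has size $\order-1$, so every term on the right except $\des{T}{\order-1}$ is the value of a descent polynomial at its own size, i.e. an honest count of labelings. Independently, I would decompose the set $D(T;\order)$ of labelings of $G(T;\order)=T$ according to which vertex carries the largest label $\order$: since $\rho\notin\descset$, that vertex is either $\rho$, or a vertex of $V^{\prime}$, or a vertex of $V^{\prime\prime}$, and the very deletion/contraction bijections used in the proof of Theorem~\ref{thm:d(T;n+1)} identify the last two families with the labelings counted by $\des{T/v}{\order-1}$ and $\sum_r\des{(T/v)_r}{\order-1}$. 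This produces a second display
\[
\des{T}{\order}=A+\sum_{v\in V^{\prime}}\des{T/v}{\order-1}+\sum_{v\in V^{\prime\prime}}\sum_{r=1}^{2^{c(v)}}\des{(T/v)_r}{\order-1},
\]
where $A$ is the number of labelings $w$ of $T$ with $\Des(w)=\descset$ and $w(\rho)=\order$. The $V^{\prime}$ and $V^{\prime\prime}$ sums are literally the same in both displays, so subtracting yields the clean identity $\des{T}{\order-1}=A$.

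It then remains to decide when $A=0$. If $\rho$ has a child $c\in\descset$, then any labeling with $w(\rho)=\order$ would force $w(c)>w(\rho)=\order$, which is impossible, so $A=0$ and $\order-1$ is a root. Conversely, if no child of $\rho$ lies in $\descset$, I would exhibit a labeling directly: place $\order$ at $\rho$, and note that each subtree hanging below a child $c_i$ has its own root $c_i\notin\descset$, so by the existence argument of Section~\ref{S:introduction} it admits a labeling realizing the inherited descent set; distributing $1,\dots,\order-1$ among these subtrees and using such labelings gives a valid $w$ with $w(\rho)=\order$, so $A>0$ and $\order-1$ is not a root. This settles the case $\rho\notin\descset$.

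The case $\rho\in\descset$ is the one genuine obstacle: here $\des{T}{\order}=0$, so the combinatorial decomposition above disappears, and moreover the vertex holding the maximum could be $\rho$ itself, whose contraction $T/\rho$ is not defined. I would sidestep this entirely by running the descent recursion of Proposition~\ref{prop:recursion} at $v=\rho$ instead. Since $T_\rho=T$ we have $h_\rho=\order$, $T\setminus T_\rho=\emptyset$, and $\asc{\rho}_\rho=\asc{\rho}$, so the recursion collapses to the polynomial identity $\des{T}{n}=\binom{n}{\order}\des{\asc{\rho}}{\order}-\des{\asc{\rho}}{n}$. Evaluating at $n=\order-1$ and using $\binom{\order-1}{\order}=0$ kills the first term and leaves $\des{T}{\order-1}=-\des{\asc{\rho}}{\order-1}$. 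Now $\asc{\rho}$ has the same shape and the same children of $\rho$ as $T$ but with $\rho$ turned into an ascent, so its root is not a descent and the previous case applies: $\des{\asc{\rho}}{\order-1}=0$ if and only if $\rho$ has a child in $\descset(\asc{\rho})=\descset\setminus\{\rho\}$, equivalently a child in $\descset$. Hence in all cases $\des{T}{\order-1}=0$ if and only if $\rho$ has a child in $\descset$, which is the claim.
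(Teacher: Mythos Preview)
Your proof is correct, and it takes a genuinely different route from the paper's.

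Both arguments share the reduction step $\des{T}{\order-1}=-\des{\asc{\rho}}{\order-1}$ obtained by running Proposition~\ref{prop:recursion} at the root, but you and the paper use it in opposite directions: the paper reduces to the case where the root \emph{is} a descent, while you reduce to the case where it is \emph{not}. From there the two proofs diverge. The paper handles the forward direction with Theorem~\ref{thm:d(T;n+1)} and Lemma~\ref{lem:size root} (every summand vanishes because all the contracted trees still have a descent root of size $\order-1$), and handles the backward direction by an induction on $\order$ via Corollary~\ref{cor:ascent recursion}, peeling off an ascent child of the root and invoking the already-proved forward direction on $\desc{v}$. Your argument instead juxtaposes two expressions for $\des{T}{\order}$: the polynomial identity of Theorem~\ref{thm:d(T;n+1)} evaluated at $n=\order-1$, and the honest combinatorial decomposition of $D(T;\order)$ by the location of the label $\order$ (which differs from the theorem's decomposition precisely because $G(T;\order)=T$ has no added chain, so the ``top vertex'' term is $A$ rather than $\des{T}{\order-1}$). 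Subtracting yields $\des{T}{\order-1}=A$, and then both directions follow at once from an elementary existence/obstruction analysis of $A$. This is slicker than the paper's treatment: it avoids the induction entirely and gives $\des{T}{\order-1}$ a direct combinatorial meaning as the number of labelings of $T$ with the maximum at the root. The paper's approach, on the other hand, stays closer to the recursive toolkit and reuses Lemma~\ref{lem:size root} as a black box, which makes it easier to see how the method generalizes to the subsequent theorem about the roots $\order-1,\dots,\order-k$.
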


\begin{proof}
    Let $T$ be a tree where the root is in $\descset$ and has at least one child in $\descset$. Let $T'$ be identical to $T$ but with an ascent root. Consider applying the recursion from Proposition~\ref{prop:recursion} where $v$ is the root of the tree and $n=\order-1$:
    \begin{align*}
        \des{T}{\order-1} &= \binom{\order-1}{\order} \cdot \des{\asc{v}_v}{h_v} \cdot \des{T \setminus T_v}{\order-1-h_v} - \des{\asc{v}}{\order-1} \\
            &= - \des{\asc{v}}{\order-1}
    \end{align*}
    From this, we see that $\order-1$ is a root of $\des{T'}{n}$ if and only if it is a root of $\des{T}{n}$. So, for the rest of this proof it is sufficient to consider only a tree $T$ with its root in $\descset$.
    
    For such a tree $T$ we consider the recursion from Theorem~\ref{thm:d(T;n+1)} evaluated at $n = \order-1$:  
    \begin{equation*}
        \des{T}{\order} = \des{T}{\order-1} + \sum_{v \in V^{\prime}} \des{T/v}{\order-1} + \sum_{v \in V^{\prime \prime}} \left( \sum_{r=1}^{2^{c(v)}} \des{(T/v)_r}{\order-1} \right) .
    \end{equation*}
    Since $T$ has a descent root, $\des{T}{\order}$ = 0 by Lemma~\ref{lem:size root}. Note that the root is not in $V'$, which means that the root is never removed in either summation. Therefore, every tree $T/v$ in either summation has size $s-1$ and a descent root. By Lemma~\ref{lem:size root} once more, every term in both summations is therefore equal to 0. We are left with 
    \begin{equation*}
        0 = \des{T}{\order-1},
    \end{equation*}
    which is what we wanted to show.
    
    We will prove the reverse statement through induction on $\order$. For any tree $T$ with $\order = 1$, the root of the tree has no descent children and $\order-1$ is not a root of $\des{T}{n}$. Now we assume that $\order-1$ is not a root of $\des{T}{n}$ for all trees with $\order < k$ that have roots with no children in $\descset$.
    
    Now let $T$ be a tree of size $k$ whose root is in $\descset$ but has no children in $\descset$. We apply the recursion from Corollary~\ref{cor:ascent recursion} where $v$ is a child of the root and $n = k-1$:
    \begin{equation*}
        \des{T}{k-1} = \binom{k-1}{h_v} \cdot \des{T_v}{h_v} \cdot \des{T \setminus T_v}{k-1-h_v} - \des{\desc{v}}{k-1}
    \end{equation*}
    From the forwards direction of this proof, we know that $\des{\desc{v}}{k-1} = 0$. We know by construction that $h_v \leq k-1$, so $\binom{k-1}{h_v}$ does not equal 0. By Lemma~\ref{lem:size root}, we know that $\des{T_v}{h_v}$ does not equal 0. By construction, $T\setminus T_v$ is a tree with $\order<k$ whose root has no descent children and $k-1-h_v$ is equal to the size of $T\setminus T_v - 1$, which means that by our inductive hypothesis, $\des{T \setminus T_v}{k-1-h_v}$ does not equal 0. Therefore, $\des{T}{k-1}$ does not equal 0, which is what we wanted to show. 
\end{proof}


\begin{thm}
    For a tree $T$ with size $\order$ and whose root has $k$ children that are in $\descset$ (where $k>0$), $\order-1, \order-2, \ldots, \order-k$ are roots of $\des{T}{n}$.
\end{thm}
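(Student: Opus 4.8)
The plan is to prove, by induction on $k$, the slightly more flexible statement: \emph{if the root of a tree $T$ of size $\order$ has at least $k$ children in $\descset$ (with $k \geq 1$), then $\order-1, \order-2, \dots, \order-k$ are all roots of $\des{T}{n}$.} This implies the theorem, since a root with exactly $k$ descent children in particular has at least $k$. The base case $k=1$ is precisely the forward direction of Theorem~\ref{thm:s-1 root}: having at least one descent child forces $\order-1$ to be a root. Throughout, I will use that the recursion I invoke is an identity of polynomials in $n$: although Theorem~\ref{thm:d(T;n+1)} is derived combinatorially for $n \geq \order-1$, both sides are polynomials agreeing at infinitely many integers, so the identity persists at the smaller argument $n = \order-k$ that I will substitute.

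For the inductive step, fix $k \geq 2$ and assume the statement for $k-1$. Let $T$ have size $\order$ with at least $k$ children of the root in $\descset$. Since the root then has at least $k-1 \geq 1$ descent children, the inductive hypothesis applied to $T$ itself immediately yields that $\order-1, \dots, \order-(k-1)$ are roots of $\des{T}{n}$; only the deepest value $\order-k$ remains. To reach it, I would evaluate Theorem~\ref{thm:d(T;n+1)} at $n = \order-k$:
\begin{equation*}
\des{T}{\order-k+1} = \des{T}{\order-k} + \sum_{v \in V^{\prime}} \des{T/v}{\order-k} + \sum_{v \in V^{\prime\prime}} \sum_{r=1}^{2^{c(v)}} \des{(T/v)_r}{\order-k}.
\end{equation*}
The left-hand side equals $\des{T}{\order-(k-1)}$, which is $0$ by the part just established. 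If every contracted term on the right also vanishes at $n = \order-k$, then $\des{T}{\order-k}=0$, completing the induction.

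The crux is therefore to show that each contracted tree on the right has a root with at least $k-1$ descent children, for then the inductive hypothesis (for $k-1$), applied to that tree of size $\order-1$, guarantees that $(\order-1)-(k-1) = \order-k$ is one of its roots, killing the term. The decisive structural observation is that the root of $T$ is \emph{never} among the contracted vertices: it is not a leaf (it has children), so it is not in $V^{\prime}$, and it has a descent child, so it is not in $V^{\prime\prime}$; hence each contraction leaves the original root in place. I would then check the three ways a contraction can affect the root's descent children: (i) if the contracted vertex $v$ does not have the root as its parent, the root's children and their statuses are untouched, so it keeps its $\geq k$ descent children; (ii) if $v \in V^{\prime}$ is a descent leaf child of the root, contracting it removes exactly one descent child, leaving $\geq k-1$; (iii) if $v \in V^{\prime\prime}$ is a descent child of the root, contraction promotes $v$'s ascent children to children of the root, and in $(T/v)_r$ some subset of them may be marked as descents, so the root retains at least its other $\geq k-1$ descent children. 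In every case the count is $\geq k-1$, as needed.

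The main obstacle is exactly this deepest root $\order-k$: the shallower roots $\order-1,\dots,\order-(k-1)$ drop out of the inductive hypothesis with essentially no work, whereas $\order-k$ is genuinely new and requires the finite-difference identity of Theorem~\ref{thm:d(T;n+1)} together with the case analysis above. I expect the bookkeeping in case (iii) to be the most delicate point: one must observe that marking extra children as descents can only \emph{increase} the relevant count, so that the hypothesis for parameter $k-1$ still applies even when the contracted root ends up with more than $k-1$ descent children. This is precisely why I would phrase the induction with ``at least $k$'' rather than ``exactly $k$''.
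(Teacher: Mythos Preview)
Your proof is correct and follows essentially the same route as the paper's: induct on the number of descent children (phrased with ``at least''), use Theorem~\ref{thm:s-1 root} for the base case, and for the inductive step evaluate the finite-difference identity of Theorem~\ref{thm:d(T;n+1)} at the relevant integer, killing the left side and all contraction terms via the inductive hypothesis applied to the size-$(\order-1)$ trees. Your write-up is in fact a bit more careful than the paper's in two places: you make explicit that Theorem~\ref{thm:d(T;n+1)} persists as a polynomial identity at arguments below $\order$, and you spell out the three-case analysis showing that every contracted tree still has at least $k-1$ descent children at the root (the paper asserts this without the breakdown).
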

\begin{proof}
    Let $T$ be a tree whose root has $k>0$ children that are in $\descset$. We know from Theorem~\ref{thm:s-1 root} that $\order-1$ is a root of $\des{T}{n}$ for all trees with at least 1 child of the root in $\descset$. Now we'll assume that $\order-m$ is a root of $\des{T}{n}$ for all trees with at least $m$ children of the root in $\descset$ and show that $\order-(m+1)$ is a root of $\des{T}{n}$ for $m+1 \leq k$. We consider the recursion from Theorem~\ref{thm:d(T;n+1)} evaluated at $n = \order-(m+1)$:
    \begin{equation*}
        \des{T}{\order-m} = \des{T}{\order-(m+1)} + \sum_{v \in V^{\prime}} \des{T/v}{\order-(m+1)} + \sum_{v \in V^{\prime \prime}} \left( \sum_{r=1}^{2^{c(v)}} \des{(T/v)_r}{\order-(m+1)} \right) .
    \end{equation*}
    By our inductive hypothesis, the left-hand side of this equation is 0. Now, note that for every tree $T'$ in the summations, the root of $T'$ has at least $m$ descent children and the size of the tree is $\order-1$. So, by our inductive hypothesis, all of the summation terms also evaluate to 0 (to see why this is true, think of $\order-(m+1)$ as $(\order-1)-m$). These observations leave us with 
    \begin{equation*}
        0 = \des{T}{\order-(m+1)}
    \end{equation*}
    which is what we wanted to show. 
\end{proof}

    Note that this is not a complete characterization of these roots. There are many examples of trees $T$ where $\des{T}{n}$ has $\order-m$ as a root while the root of the tree has fewer than $m$ children in $\descset$. For example, the tree in Figure~\ref{fig:6v-example} has the polynomial 
    \begin{equation*}
        \des{T}{n} = \frac{x^6}{36} - \frac{5x^5}{12} + \frac{19x^4}{9} - \frac{49x^3}{12} + \frac{103x^2}{36} - \frac{x}{2} - 10,
    \end{equation*} 
    which has a root of 4 even though the root of the tree has less than two children in $\descset$.
 
\begin{thm} \label{thm: one is root}
    For a tree $T$, 1 is a root of $\des{T}{n}$ if and only if all the leaves in $T$ are in $\descset$. 
\end{thm}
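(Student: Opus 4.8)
The plan is to evaluate the recursion of Theorem~\ref{thm:d(T;n+1)} at $n=0$, converting the question of whether $1$ is a root of $\des{T}{n}$ into an identity among counts of natural labelings via Lemma~\ref{lem: des of zero}. Each descent polynomial occurring is genuinely a polynomial in $n$ (Corollary~\ref{cor:degree}), and the recursion of Theorem~\ref{thm:d(T;n+1)} is an equality of polynomials, so it may be specialized to $n=0$:
\[
\des{T}{1} = \des{T}{0} + \sum_{v \in V^{\prime}} \des{T/v}{0} + \sum_{v \in V^{\prime\prime}} \sum_{r=1}^{2^{c(v)}} \des{(T/v)_r}{0}.
\]
By Lemma~\ref{lem: des of zero}, every term on the right is $(-1)^{(\text{number of descents})}$ times a number of natural labelings.

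First I would dispose of the $V^{\prime\prime}$ sum. The crucial observation is that $\natlab{\cdot}$ depends only on the underlying tree shape, not on which vertices are marked as descents. For a fixed $v \in V^{\prime\prime}$ the $2^{c(v)}$ trees $(T/v)_r$ all share the shape $T/v$ and differ only in the subset $S$ of the (formerly ascent) children of $v$ declared to be descents; such a choice produces $\size{\descset}-1+\size{S}$ descents. Hence the inner sum equals $(-1)^{\size{\descset}-1}\natlab{T/v}\sum_{S \subseteq \mathrm{ch}(v)}(-1)^{\size{S}}$, and since $v$ is not a leaf we have $c(v)\geq 1$, so $\sum_{S}(-1)^{\size{S}}=0$ and the whole $V^{\prime\prime}$ contribution vanishes. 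For $v \in V^{\prime}$ (a descent leaf) we have $T/v = T\setminus v$ with descent set of size $\size{\descset}-1$, so after factoring out $(-1)^{\size{\descset}}$ the identity collapses to
\[
(-1)^{\size{\descset}}\des{T}{1} = \natlab{T} - \sum_{v \in V^{\prime}} \natlab{T\setminus v}.
\]

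The final step is a clean combinatorial identity. In any natural labeling the minimum label $1$ must sit at a leaf, since a non-leaf vertex has a child with a smaller label; partitioning the natural labelings of $T$ by the leaf carrying $1$, and removing that leaf and standardizing, yields $\natlab{T} = \sum_{v\text{ leaf}} \natlab{T\setminus v}$. Subtracting the descent-leaf terms leaves
\[
(-1)^{\size{\descset}}\des{T}{1} = \sum_{v\text{ ascent leaf}} \natlab{T\setminus v},
\]
a sum of strictly positive integers. Therefore $\des{T}{1}=0$ precisely when there is no ascent leaf, i.e.\ when every leaf of $T$ lies in $\descset$, which is exactly the claim. I expect the main obstacle to be the careful bookkeeping of signs in the $V^{\prime\prime}$ cancellation—tracking how the descent count shifts under contraction and the re-marking of the children of $v$—rather than the concluding leaf identity, which is standard.
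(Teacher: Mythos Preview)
Your argument is correct, and it takes a genuinely different route from the paper's proof. The paper works with the subtree-cutting recursions (Proposition~\ref{prop:recursion} and Corollary~\ref{cor:ascent recursion}): it first observes that flipping the descent/ascent status of a \emph{non-leaf} vertex only changes the sign of $\des{T}{1}$, then handles the two directions separately---the forward direction by reducing to the all-descent tree and invoking Lemma~\ref{lem: full roots}, and the converse by iterating the ascent recursion at each ascent leaf to produce a nonvanishing signed sum of $\natlab{T_n}$'s. You instead specialize the contraction recursion of Theorem~\ref{thm:d(T;n+1)} at $n=0$, exploit the binomial-type cancellation $\sum_{S\subseteq \mathrm{ch}(v)}(-1)^{|S|}=0$ to kill the $V''$ contribution, and close with the elementary leaf identity $\natlab{T}=\sum_{v\ \text{leaf}}\natlab{T\setminus v}$, obtaining a single closed formula $(-1)^{|\descset|}\des{T}{1}=\sum_{v\ \text{ascent leaf}}\natlab{T\setminus v}$ that settles both directions at once. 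Your approach is shorter and more uniform, and the final formula is pleasant in its own right; the paper's version has the mild advantage of avoiding Theorem~\ref{thm:d(T;n+1)} and reusing Lemma~\ref{lem: full roots}, but at the cost of a longer, case-split argument.
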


\begin{proof}
    Let $T$ be a tree with arbitrary descent structure. We consider the equation from Proposition~\ref{prop:recursion} evaluated at $n=1$, where $v$ is not a leaf:
    $$ \des{T}{1} = \binom{1}{h_v} \cdot \des{\asc{v}_v}{h_v} \cdot \des{T \smallsetminus T_v}{1-h_v} - \des{\asc{v}}{1}.$$
    Since $v$ is not a leaf, $h_v > 1$. Therefore, $\binom{1}{h_v}$= 0 and the equation simplifies to
    $$ \des{T}{1} = - \des{\asc{v}}{1}.$$
    Thus, for any tree, changing a non-leaf vertex from a descent point to an ascent point swaps the sign of $\des{T}{1}$ without affecting its magnitude.
    
    First, we consider the forwards direction. Let $T$ be a tree with descent points at all of its leaves. Let $T'$ be a tree with the same vertex structure but where all the vertices are descents. Note that we can transform $T'$ into $T$ by turning the appropriate non-leaf descent points into ascent points. Therefore, $\des{T}{1}$ and $\des{T'}{1}$ have the same magnitude. From Lemma~\ref{lem: full roots} we know that $\des{T'}{1}=0$. Therefore, $\des{T}{1} = 0$, as desired.
    
    Now, we consider the backwards direction. Let $T$ be a tree with $k$ non-descent leaves. Consider applying the recursion from Corollary~\ref{cor:ascent recursion} at one such leaf. This produces the following equation:
    \begin{equation*}
        \des{T}{1} = \des{T_1}{0} - \des{T_1'}{1}
    \end{equation*}
    where $T_n$ is a tree identical to $T$ but with 1 leaf ascent removed and $n-1$ leaf ascents turned to descents and $T_n'$ is identical to $T_n$ except the first leaf is turned into a descent rather than removed.
    
    We continue to apply the recursion on the right-hand term until we end up with the following: 
    \begin{equation*}
        \des{T}{1} = \des{T_1}{0} - [\des{T_2}{0} - [\des{T_3}{0} - \cdots - [\des{T_k}{0} - \des{T_k'}{1}]]].
    \end{equation*}
    Taking into account the nested subtractions, this becomes
    \begin{equation*}
        \des{T}{1} = (-1)^k\cdot\des{T_k'}{1} + \sum_{n=1}^k (-1)^{n-1} \cdot \des{T_n}{0}
    \end{equation*}
    By construction, all of the leaves in $T_k'$ are descents, which means that we can apply the forwards direction of this proof and reduce $\des{T_k'}{1}$ to 0. 
    
    Consider now that $T_n$ has one fewer descent than $T_{n+1}$, so by Lemma~\ref{lem: des of zero}, $\des{T_n}{0}$ and $\des{T_{n+1}}{0}$ are non-zero numbers with opposite signs. In fact, we know that the sign of $\des{T_1}{0}$ is $(-1)^{|\descset|}$, where $\descset$ is the set of descents in the original tree $T$. Taking these three observations into account, the simplified expression for $\des{T}{1}$ is
    \begin{align*}
        \des{T}{1} &= \sum_{n=1}^k (-1)^{n-1} \cdot (-1)^{|\descset| -n} \cdot(-1) \cdot \natlab{T_n} \\
        &= (-1)^{|\descset|} \sum_{n=1}^k\natlab{T_n}
    \end{align*}
    where $\natlab{T_n}$ is the number of natural labelings of $T_n$. Since the summation is a series of strictly positive numbers, $\des{T}{1}$ does not equal 0, which is what we wanted to show. 
\end{proof}

\begin{prop} \label{prop:same-roots}
    Let $T$ be a tree of size $\order$ such that its only descent is at the root, then \[\des{T}{n} = \left( \binom{n}{\order} - 1 \right) \cdot \natlab{T}\].
    
    In particular, if $T$ and $T^{\prime}$ are two trees of size $\order$ such that their only descent is their roots, then their descent polynomials have the same roots. 
\end{prop}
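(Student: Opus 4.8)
The plan is to apply the recursion of Proposition~\ref{prop:recursion} at the unique descent vertex, the root $r$ of $T$, and observe that every factor degenerates into something elementary. Since $r$ is the root, its hook length is $h_r = \order$ and the subtree $T_r$ equals all of $T$; consequently $\asc{r}_r = \asc{r}$, $T \setminus T_r = \emptyset$, and $\des{\emptyset}{n-\order} = 1$ by convention. Because the only descent of $T$ was at $r$, the tree $\asc{r}$ has empty descent set. Substituting these observations into~\eqref{rec} yields
\begin{equation*}
    \des{T}{n} = \binom{n}{\order} \cdot \des{\asc{r}}{\order} \cdot 1 - \des{\asc{r}}{n}.
\end{equation*}

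Next I would identify the two remaining descent polynomials. Since $\asc{r}$ has no descents, the $\degree = 0$ case of Corollary~\ref{cor:degree} shows that $\des{\asc{r}}{n}$ is the constant $\natlab{\asc{r}}$. The number of natural labelings depends only on the underlying tree shape through the hook-length formula and not on the marked descent set, so $\natlab{\asc{r}} = \natlab{T}$. Hence both $\des{\asc{r}}{\order}$ and $\des{\asc{r}}{n}$ equal $\natlab{T}$, and the display above becomes
\begin{equation*}
    \des{T}{n} = \binom{n}{\order} \natlab{T} - \natlab{T} = \left( \binom{n}{\order} - 1 \right) \natlab{T},
\end{equation*}
which is the claimed formula.

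For the final assertion, I would note that $\natlab{T}$ is a nonzero constant, so the roots of $\des{T}{n}$ coincide with the roots of $\binom{n}{\order} - 1$. Since this latter polynomial depends only on $\order$ and not on the particular tree, any two trees $T$ and $T^{\prime}$ of size $\order$ whose only descent is the root yield descent polynomials that differ only by the positive constant factors $\natlab{T}$ and $\natlab{T^{\prime}}$, and therefore share the same set of roots.

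I do not expect a genuine obstacle here: the only subtlety is correctly bookkeeping the degenerate factors produced by the recursion—in particular recognizing that $\asc{r}_r = \asc{r}$ and $T \setminus T_r = \emptyset$, and that the constant value $\des{\asc{r}}{n}$ is precisely the hook-length count $\natlab{T}$, which is insensitive to the descent marking.
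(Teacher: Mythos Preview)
Your proposal is correct and follows the same strategy as the paper: apply the recursion of Proposition~\ref{prop:recursion} at the root $r$, then identify the degenerate factors. Your bookkeeping is arguably tidier than the paper's---you invoke the convention $\des{\emptyset}{k}=1$ directly and cite the $\degree=0$ case of Corollary~\ref{cor:degree} for the constancy of $\des{\asc{r}}{n}$, whereas the paper re-derives $\natlab{G(T;n)}=\natlab{T}$ via the hook-length formula---but the underlying argument is identical.
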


\begin{proof}
    To show this, we calculate $\des{T}{n}$ for some $T$ as defined in our proposition using the recursion found in Proposition~\ref{prop:recursion}. For $r$ the root of $T$, we find,
    \begin{equation*}
        \des{T}{n} = \binom{n}{\order} \des{\asc{\text{r}}_{\text{r}}}{\order} \des{T \smallsetminus T_{\text{r}}}{n - \order} - \des{\asc{\text{r}}}{n} \text{.}
    \end{equation*}
    
    However $\des{T \smallsetminus T_{\text{r}}}{n - \order} = 1$ since it labels only a path with no descents. Note, also that $\natlab{T} = \natlab{G(T;n)}$. We quickly prove this by calculating $G(T;n+1)$ for $n \geq \order$. Using the formula for $\natlab{T}$ from Section~\ref{S:introduction} we can write \[\natlab{G(T;n+1)} = \frac{(n+1)!}{\prod_{v \in V(G(T;n+1))}h_v} = \frac{n! \cdot (n+1)}{\prod_{v \in V(G(T;n))}h_v \cdot (n+1)} = \natlab{G(T;n)}.\] Since $r$ is the only descent, this implies $\des{\asc{\text{r}}_{\text{r}}}{\order} = \des{\asc{\text{r}}}{n} = \natlab{T}$. So, we get,
    \begin{equation*}
        \des{T}{n} = \left( \binom{n}{\order} - 1 \right) \cdot \natlab{T}
    \end{equation*}
    as desired.
\end{proof}

\begin{cor} \label{cor:-1-root}
    Let $T$ be a tree of size $\order$ such that $T$ only has a descent at its root. Then, $\des{T}{-1} = 0$ if and only if $\order$ is even.
\end{cor}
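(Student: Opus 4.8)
The plan is to invoke Proposition~\ref{prop:same-roots} directly, since it gives a closed form for exactly the class of trees under consideration. Because $T$ has size $\order$ with its only descent at the root, that proposition yields
\begin{equation*}
    \des{T}{n} = \left( \binom{n}{\order} - 1 \right) \cdot \natlab{T}.
\end{equation*}
The number of natural labelings $\natlab{T}$ is a strictly positive integer (it is given by the hook-length formula and is nonzero), so it plays no role in locating the roots. Thus $\des{T}{-1} = 0$ if and only if $\binom{-1}{\order} - 1 = 0$, i.e.\ if and only if $\binom{-1}{\order} = 1$, where the binomial coefficient is interpreted as the polynomial $\binom{x}{\order}$ evaluated at $x = -1$.

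The next step is to compute this generalized binomial coefficient. Using the falling-factorial definition,
\begin{equation*}
    \binom{-1}{\order} = \frac{(-1)(-2)\cdots(-\order)}{\order!} = \frac{(-1)^{\order}\,\order!}{\order!} = (-1)^{\order}.
\end{equation*}
Substituting this in, the condition $\binom{-1}{\order} = 1$ becomes $(-1)^{\order} = 1$, which holds precisely when $\order$ is even. This establishes both directions of the equivalence simultaneously.

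I do not anticipate any genuine obstacle here: the entire content is packaged in Proposition~\ref{prop:same-roots}, and what remains is the standard evaluation $\binom{-1}{\order} = (-1)^{\order}$ together with the observation that $\natlab{T} \neq 0$. The only point requiring a word of care is that $\binom{n}{\order}$ must be read as the degree-$\order$ polynomial in $n$ (so that evaluation at $n=-1$ is meaningful), which is consistent with the usage throughout the paper since $\des{T}{n}$ is treated as a polynomial. With that understood, the proof is a one-line substitution followed by the sign computation.
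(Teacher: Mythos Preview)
Your proof is correct and follows essentially the same approach as the paper: both invoke Proposition~\ref{prop:same-roots}, substitute $n=-1$, compute $\binom{-1}{\order}=(-1)^{\order}$ via the falling-factorial definition, and use that $\natlab{T}\neq 0$ to conclude.
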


\begin{proof}
    Take the formula for $\des{T}{n}$ we found in Proposition~\ref{prop:same-roots}, and plug in $-1$, we get
    \begin{align*}
        \des{T}{-1} &= \left( \binom{-1}{\order} - 1 \right) \cdot \natlab{T} \\
        &= \left( \frac{(-1)(-2) \cdots (-\order)}{\order!} - 1 \right) \cdot \natlab{T} \\
        &= ((-1)^{\order} - 1)\cdot \natlab{T} \text{.}
    \end{align*}
    
    Here, if $\order$ is even, then $(-1)^{\order} - 1 = 0$ and $-1$ is a root. But, if $\order$ is odd, $(-1)^{\order} - 1 = -2$ and $-1$ is not a root.
\end{proof}


\begin{thebibliography}{10}

\bibitem{bencs2021some}
F.~Bencs.
\newblock Some coefficient sequences related to the descent polynomial.
\newblock {\em European Journal of Combinatorics}, 98:103396, 2021.

\bibitem{billey2013permutations}
S.~Billey, K.~Burdzy, and B.~E. Sagan.
\newblock Permutations with given peak set.
\newblock {\em Journal of Integer Sequences}, 16(2):3, 2013.

\bibitem{brenti1989unimodal}
F.~Brenti.
\newblock {\em Unimodal log-concave and P{\'o}lya frequency sequences in
  combinatorics}.
\newblock American Mathematical Soc., 1989.

\bibitem{diaz2017proof}
A.~Diaz-Lopez, P.~E. Harris, E.~Insko, and M.~Omar.
\newblock A proof of the peak polynomial positivity conjecture.
\newblock {\em Journal of Combinatorial Theory, Series A}, 149:21--29, 2017.

\bibitem{diaz2019descent}
A.~Diaz-Lopez, P.~E. Harris, E.~Insko, M.~Omar, and B.~E. Sagan.
\newblock Descent polynomials.
\newblock {\em Discrete Mathematics}, 342(6):1674--1686, 2019.

\bibitem{gaetz2021q}
C.~Gaetz and Y.~Gao.
\newblock On q-analogs of descent and peak polynomials.
\newblock {\em European Journal of Combinatorics}, 97:103397, 2021.

\bibitem{gonzalez2016note}
R.~S. Gonz{\'a}lez~D'Le{\'o}n.
\newblock A note on the $\backslash\gamma$-coefficients of the tree eulerian
  polynomial.
\newblock {\em The Electronic Journal of Combinatorics}, 23(1):P1--20, 2016.

\bibitem{grady2020tree}
A.~Grady and S.~Poznanovi{\'c}.
\newblock Tree descent polynomials: Unimodality and central limit theorem.
\newblock {\em Annals of Combinatorics}, 24(1):109--117, 2020.

\bibitem{jiradilok2019roots}
P.~Jiradilok and T.~McConville.
\newblock Roots of descent polynomials and an algebraic inequality on hook
  lengths.
\newblock {\em arXiv preprint arXiv:1910.14631}, 2019.

\bibitem{macmahon2001combinatory}
P.~A. MacMahon.
\newblock {\em Combinatory analysis, volumes I and II}, volume~1.
\newblock American Mathematical Soc., 2001.

\bibitem{raychev2023generalization}
A.~Raychev.
\newblock A generalization of descent polynomials.
\newblock {\em Discrete Mathematics}, 346(1):113105, 2023.

\bibitem{stanley1981two}
R.~P. Stanley.
\newblock Two combinatorial applications of the aleksandrov-fenchel
  inequalities.
\newblock {\em Journal of Combinatorial Theory, Series A}, 31(1):56--65, 1981.

\bibitem{stanley1986two}
R.~P. Stanley.
\newblock Two poset polytopes.
\newblock {\em Discrete \& Computational Geometry}, 1(1):9--23, 1986.

\end{thebibliography}

\end{document}